\newcommand{\RR}{{\mathbb{R}}}
\newcommand{\Sph}{{\mathbb{S}}}
\newcommand{\SetOf}[2]{\left\{#1\vphantom{#2}\,\right.\left|\,\vphantom{#1}#2\right\}}
\newcommand{\smallSetOf}[2]{\{#1\,|\,#2\}}
\newcommand{\bigSetOf}[2]{\bigl\{#1\,\big|\,#2\bigr\}}
\DeclareMathOperator{\aff}{aff}
\DeclareMathOperator{\conv}{conv}
\DeclareMathOperator{\pos}{pos}
\newcommand\cprime{$'$}
\newcommand{\transpose}[1]{{#1}^{\sf T}}
\newcommand{\SplitComplex}[1]{{\rm Split}(#1)}
\newcommand{\WeakSplitComplex}[1]{{\rm Split}^{\rm w}(#1)}
\DeclareMathOperator{\Vertices}{Vert}
\DeclareMathOperator{\interior}{int}
\newcommand{\join}{*}
\newcommand{\SecondaryFan}[1]{{\rm SecFan}(#1)}
\newcommand{\SecondaryFanOne}[1]{{\rm SecFan}'(#1)}
\newcommand{\SecondaryPolytope}[1]{{\rm SecPoly}(#1)} 
\newcommand{\ChamberComplex}[1]{{\rm Chamber}(#1)}
\newcommand{\GaleDual}[1]{{\rm Gale}(#1)}
\theoremstyle{plain}
\newtheorem{theorem}{Theorem}
\newtheorem{proposition}[theorem]{Proposition}
\newtheorem{corollary}[theorem]{Corollary}
\newtheorem{lemma}[theorem]{Lemma}
\theoremstyle{definition}
\newtheorem{example}[theorem]{Example}
\newtheorem{remark}[theorem]{Remark}
\begin{document}

\title{Totally Splittable Polytopes}

\author[Herrmann and Joswig]{Sven Herrmann \and Michael Joswig}
\address{Sven Herrmann, Fachbereich Mathematik, TU Darmstadt, 64289 Darmstadt, Germany}
\email{sherrmann@mathematik.tu-darmstadt.de}
\address{Michael Joswig, Fachbereich Mathematik, TU Darmstadt, 64289 Darmstadt, and
  Institut für Mathematik, TU Berlin, 10623 Berlin, Germany}
\email{joswig@math.tu-berlin.de}
\thanks{Sven Herrmann is supported by a Graduate Grant of TU Darmstadt. Research by
  Michael Joswig is supported by DFG Research Unit ``Polyhedral Surfaces''.}

\date{\today}

\begin{abstract}
  A \emph{split} of a polytope is a (necessarily regular) subdivision with exactly two
  maximal cells.  A polytope is \emph{totally splittable} if each triangulation (without
  additional vertices) is a common refinement of splits.  This paper establishes a
  complete classification of the totally splittable polytopes.
\end{abstract}

\maketitle

\section{Introduction}

Splits (of hypersimplices) first occurred in the work of Bandelt and Dress on
decompositions of finite metric spaces with applications to phylogenetics in algorithmic
biology~\cite{BandeltDress92}.  This was later generalized to a result on arbitrary
polytopes by Hirai~\cite{Hirai06} and the authors~\cite{HerrmannJoswig08}.  While many
polytopes do not admit a single split, the purpose of this paper is to study polytopes
with very many splits.

The set of all regular subdivisions of a polytope $P$, partially ordered by refinement,
has the structure of the face lattice of a polytope, the \emph{secondary polytope} of $P$
introduced by Gel$'$fand, Kapranov, and Zelevinsky~\cite{GKZ94}.  The vertices of the
secondary polytope correspond to the regular triangulations, while the facets correspond
to the regular coarsest subdivisions.  There is a host of knowledge on triangulations of
polytopes \cite{Triangulations}, but information on coarsest subdivisions is scarce.
Splits are obviously coarsest subdivisions and moreover known to be regular.  So they
correspond to facets of the secondary polytope.  The total splittability of $P$ is
equivalent to the property that each facet of the secondary polytope of $P$ arises from a
split.  Via a \emph{compatibility} relation the splits of a polytope form an abstract
simplicial complex.  For instance, for the hypersimplices $\Delta(d,n)$ this turns out to
be a subcomplex of the \emph{Dressian} Dr$(d,n)$ which is an outer approximation (in terms
of matroid decompositions) of the tropical variety arising from the Grassmannian of
$d$-planes in $n$-space; see \cite[Theorem~7.8]{HerrmannJoswig08} and
\cite{HerrmannJensenJoswigSturmfels09}.

As can be expected the assumption of total splittability restricts the combinatorics of
$P$ drastically.  We prove that the totally splittable polytopes are the simplices, the
polygons, the regular crosspolytopes, the prisms over simplices, or joins of these.
Interestingly, our classification seems to coincide with those infinite families of
polytopes for which the secondary polytopes are known.  This suggests that, in order to
derive more detailed information about the secondary polytopes of other polytopes, it is
crucial to systematically investigate coarsest subdivisions other than splits.  Such a
task, however, is beyond the scope of this paper.

This is how our proof (and thus the paper) is organized: It will frequently turn out to be
convenient to phrase facts in terms of a Gale dual of a polytope.  Hence we begin our
paper with a short introduction to Gale duality and chamber complexes.  The first
important step towards the classification is the easy Proposition~\ref{prop:vertex-splits}
which shows that the neighbors of a vertex of a totally splittable polytope must span an
affine hyperplane.  Then the following observation turns out to be useful: Whenever $P$ is
a prism over a $(d-1)$-simplex or a $d$-dimensional regular crosspolytope with $d\ge 3$,
there is no place for a point $v$ outside $P$ such that $\conv(P\cup\{v\})$ is totally
splittable.  In this sense, prisms and crosspolytopes are \emph{maximally totally
  splittable}.  It is clear that the case of $d=2$ is quite different; and it is one
technical difficulty in the proof to intrinsically distinguish between polygons and higher
dimensional polytopes.  The next step is a careful analysis of the Gale dual of a totally
splittable polytope which makes it possible to recognize a potential decomposition as a
join.  A final reduction argument allows one to concentrate on maximally totally
splittable factors, which then can be identified again via their Gale duals.

We are indebted to the anonymous referees for very careful reading which lead to several
improvements in the exposition.

\section{Splits and Gale Duality}

Let $V$ be a configuration of $n\ge d+1$ (not necessarily distinct) non-zero vectors in
$\RR^{d+1}$ which linearly spans the whole space.  Often we identify $V$ with the
$n\times(d+1)$-matrix whose rows are the points in $V$, and our assumption says that the
matrix $V$ has full rank $d+1$.  Such a vector configuration gives rise to an \emph{oriented
matroid} in the following way: For a linear form $a\in(\RR^{d+1})^\star$ we have a
\emph{covector} $C^\star \in \{0,+,-\}^V$ by
\[
C^\star(v) \ := \
\begin{cases}
  0 & \text{if }av=0 \, , \\
  + & \text{if }av>0 \, , \\
  - & \text{if }av<0 \, .
\end{cases}
\]
For $\epsilon\in\{0,+,-\}$ we let $C^\star_\epsilon:=\smallSetOf{v\in
  V}{C^\star(v)=\epsilon}$, and we call the multiset $C^\star_+\cup C^\star_-$ the
\emph{support} of~$C^\star$.  Occasionally, the complement $C^\star_0$ will be called the
\emph{cosupport} of $C^\star$.  A covector whose support is minimal with respect to
inclusion of multisets is a \emph{cocircuit}; equivalently, its cosupport is maximal.
Dually, $C\subset \{0,+,-\}^V$ is called a \emph{vector} of $V$ if the linear dependence
\[
\sum_{v\in C_+}\lambda_v v \ = \ \sum_{v\in C_-}\lambda_v v
\]
holds for some coefficients $\lambda_v>0$; here $C_\epsilon$ is defined as for the
cocircuits.  The vectors with minimal support are the \emph{circuits}. Note that a point
configuration defines the circuits and cocircuits only up to a sign reversal.
Occasionally, we will speak of ``unique'' (co-)circuits with given properties, and in
these cases we always mean uniqueness up to such a reversal of the signs.  See monograph
\cite{BLSWT} for all details and proofs of properties of oriented matroids.

Now consider an $n\times(n-d-1)$-matrix $V^\star$ of full rank $n-d-1$ satisfying
$\transpose{V}V^\star=0$; that is, the columns of $V^\star$ form a basis of the kernel of
$\transpose{V}$.  Then the configuration of row vectors of $V^\star$ is called a
\emph{Gale dual} of~$V$.  Any Gale dual of $V$ is uniquely determined up to affine
equivalence.  Each vector $v\in V$ corresponds to a row vector $v^\star$ of $V^\star$,
called the \emph{vector dual} to $v$.  Throughout we will assume that all dual vectors are
either zero or have unit Euclidean length.  If $v^\star$ is zero then all vectors other
than $v$ span a linear hyperplane not containing $v$. We call $V$ \emph{proper} if
$V^\star$ does not contain any zero vectors. In the primal view, this means that $\conv V$ is
not a pyramid.  For the remainder of this section we will assume that $V$ is proper whence
$V^\star$ can be identified with a configuration of $n$ points on the unit sphere
$\Sph^{n-d-2}$.  Notice that these $n$ points are not necessarily pairwise distinct.
Repetitions may occur even if the vectors in $V$ are pairwise distinct.

The connection between Gale duality and oriented matroids is the following: The circuits
of $V$ are precisely the cocircuits of $V^\star$ and conversely.  We define the
\emph{oriented matroid} of $V$ as its set of cocircuits.  Affinely equivalent vector
configurations have the same oriented matroid, but the converse does not hold.

Now let $P$ be a $d$-dimensional polytope in $\RR^d$ with $n$ vertices.  By homogenizing
the vertices $\Vertices P$, we obtain a configuration $V_P$ of $n$ non-zero vectors in
$\RR^{d+1}$ which linearly spans the whole space.  The cocircuits of $V_P$ are given by the linear
hyperplanes spanned by vectors in $V_P$.  The vector configuration $V_P$ is proper if and
only if $P$ is not a pyramid, and we will assume that this is the case.  The \emph{Gale
  dual} of $P$ is the spherical point configuration $\GaleDual{P}:=V_P^\star$, which again
is unique up to (spherical) affine equivalence.

\begin{figure}[htb]
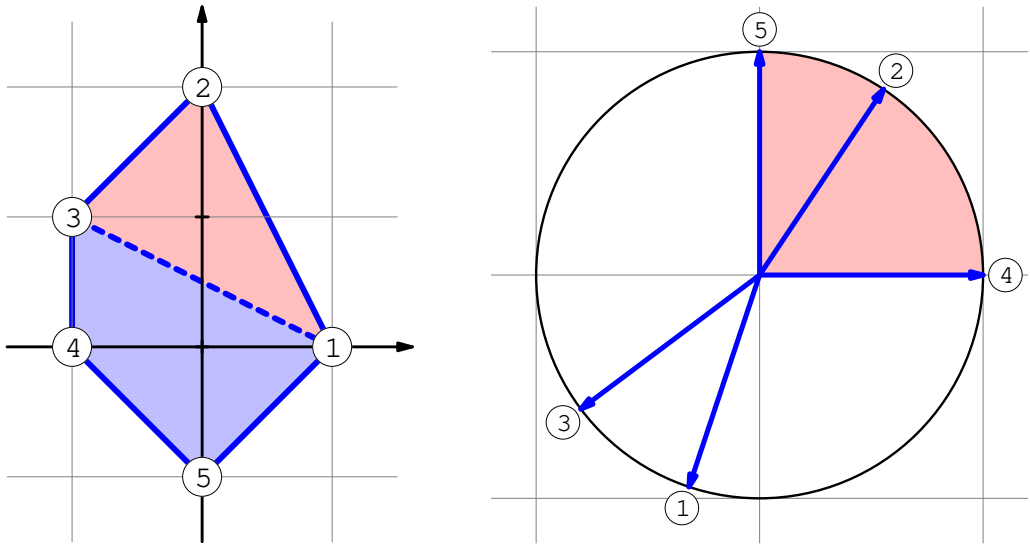

  \includegraphics[height=.45\textwidth]{pentagon.0} \qquad
  \includegraphics[height=.45\textwidth]{pentagon.1}
  \caption{Pentagon and Gale dual.  Corresponding vertices and dual vectors are labeled
    alike.}
  \label{fig:pentagon}
\end{figure}

\begin{example}\label{ex:pentagon:1}
  The matrices
  \[
  V \ := \
  \begin{pmatrix}
    1 & 1 & 0 \\
    1 & 0 & 2 \\
    1 & -1 & 1 \\
    1 & -1 & 0 \\
    1 & 0 & -1
  \end{pmatrix}
  \quad \text{and} \quad
  V^\star \ := \
  \begin{pmatrix}
    -1/3 & -1 \\
    2/3  & 1 \\
    -4/3 & -1 \\
    1 & 0 \\
    0 & 1
  \end{pmatrix}
  \]
  are Gale duals of each other.  The rows of the matrix $V$ are the homogenized vertices
  of the pentagon shown to the left in Figure~\ref{fig:pentagon}.  The Gale dual obtained
  from projecting $V^\star$ to $\Sph^1$ is shown to the right.
\end{example}

We are interested in polytopal subdivisions of our polytope $P$ and intend to study them
via Gale duality.  This requires the introduction of some notation.  A polytopal
subdivision of~$P$ is \emph{regular} if it is induced by a lifting function on the
vertices of~$P$.  The set of all lifting functions $\lambda\in\RR^n$ inducing a fixed
regular subdivision $\Sigma_\lambda$ is a relatively open polyhedral cone in $\RR^n$, the
\emph{secondary cone} of $\Sigma_\lambda$.  The set of all secondary cones forms a
polyhedral fan, the \emph{secondary fan} $\SecondaryFan{P}$.  It turns out that the
secondary fan is the normal fan of a polytope of dimension $n-d-1$, and any such polytope
is a \emph{secondary polytope} of $P$, that is the secondary polytope
$\SecondaryPolytope{P}$ is defined only up to normal equivalence.  The vertices of
$\SecondaryPolytope{P}$ correspond to the regular triangulations of $P$.  The reduction in
dimension comes from the fact that all the secondary cones in $\SecondaryFan{P}$ have a
$(d+1)$-dimensional lineality space in common.  By factoring out this lineality space and
intersecting with the unit sphere one obtains the spherical polytopal complex
$\SecondaryFanOne{P}$ in $\Sph^{n-d-2}$.  It is dual to the boundary complex of the
secondary polytope.

Now fix a Gale dual $G:=\GaleDual{P}$.  Each subset $I\subseteq[n]$ corresponds to a set
of (homogenized) vertices $V_I$.  We set $I^\star:=[n]\setminus I$ and
$V^\star_I:=\smallSetOf{v_i^\star}{i\in I}$.  Then the set $V_I$ affinely spans $\RR^d$ if and
only if the duals of the complement, that is, the set
\[
V^\star_{I^\star} \ = \ \SetOf{v_i^\star}{i\in[n]\setminus I}
\]
is linearly independent.  In particular, for each $d$-dimensional simplex $\conv V_J$ with
$\#J=d+1$ the set $\pos V^\star_{J^\star}\cap\Sph^{n-d-2}$ is a full-dimensional spherical
simplex, which is called the \emph{dual simplex} of $\conv V_J$.  The \emph{chamber
  complex} $\ChamberComplex{P}$ is the set of subsets of $\Sph^{n-d-2}$ arising from the
intersections of all the dual simplices.  The following theorem by Billera, Gel\cprime
fand, and Sturmfels \cite{BilleraSturmfels93} (see also \cite[\S5.3]{Triangulations}) is essential.

\begin{theorem}[{\cite[Theorem~3.1]{BilleraSturmfels93}}]\label{thm:secondaryDuality}
  The chamber complex $\ChamberComplex{P}$ is anti-isomorphic to the boundary complex of
  the secondary polytope $\SecondaryPolytope{P}$.
\end{theorem}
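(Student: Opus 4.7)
The plan is to identify the spherical secondary fan $\SecondaryFanOne{P}$ with the chamber complex $\ChamberComplex{P}$ via Gale duality, and then invoke the standard duality between the boundary complex of a polytope and its normal fan. First, I would determine the common lineality space $L\subseteq\RR^n$ of all secondary cones in $\SecondaryFan{P}$: $L$ consists precisely of those lifting functions obtained by restricting an affine functional on $\RR^d$ to $\Vertices P$, since such liftings yield the trivial subdivision. Hence $\dim L=d+1$, and the linear surjection $\lambda\mapsto\sum_{i=1}^n\lambda_iv_i^\star$ from $\RR^n$ onto the ambient space $\RR^{n-d-1}$ of $\GaleDual{P}$ has $L$ as its kernel by virtue of $\transpose{V}V^\star=0$. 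Quotienting and intersecting with $\Sph^{n-d-2}$ places both $\SecondaryFanOne{P}$ and $\ChamberComplex{P}$ in the same ambient sphere.

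The technical heart of the argument is the following Gale-dual description of regular subdivisions, which I would prove by unwinding the lower-envelope definition: a $d$-simplex $\conv V_J$ with $\#J=d+1$ is a maximal face of $\Sigma_\lambda$ if and only if the class $\bar\lambda\in\RR^{n-d-1}$ lies in the cone $\pos V^\star_{J^\star}$. The simplex $\conv V_J$ is a face of $\Sigma_\lambda$ exactly when there exists an affine functional $\ell$ on $\RR^d$ with $\ell(v_i)=\lambda_i$ for $i\in J$ and $\ell(v_i)\le\lambda_i$ for $i\in J^\star$. Replacing $\lambda$ by $\lambda-\ell|_{\Vertices P}$, which leaves $\bar\lambda$ unchanged because $\ell|_{\Vertices P}\in L$, one may assume $\lambda|_J=0$ and $\lambda|_{J^\star}\ge 0$; in this normalization $\bar\lambda=\sum_{i\in J^\star}\lambda_iv_i^\star\in\pos V^\star_{J^\star}$, and conversely any nonnegative representation of $\bar\lambda$ in this form provides the required $\ell$ by duality against the basis $V_J$.

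Intersecting the resulting conditions over all maximal simplices of a regular triangulation $T$ then shows that the secondary cone of $T$, quotiented by $L$, equals $\bigcap_{J\in T}\pos V^\star_{J^\star}$, which is by definition a top-dimensional cell of $\ChamberComplex{P}$. An analogous argument for lower-dimensional faces of arbitrary regular subdivisions yields $\SecondaryFanOne{P}=\ChamberComplex{P}$ as spherical polytopal complexes. Since $\SecondaryFanOne{P}$ is the spherical normal fan of $\SecondaryPolytope{P}$ modulo lineality, and the assignment of a face $F$ of a polytope to its normal cone $N_F$ is an order-reversing bijection, the chamber complex is anti-isomorphic to the boundary complex of the secondary polytope. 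The main obstacle is the Gale-dual description above: the sign conventions must be tracked with care so that $\pos V^\star_{J^\star}$ appears with the correct orientation and parametrizes exactly those liftings for which $\conv V_J$ is a lower face of the lifted point configuration; this is where the identification actually takes place, and all that remains afterwards is a packaging of standard polytope--normal-fan duality.
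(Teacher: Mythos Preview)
The paper does not supply its own proof of this statement: it is quoted verbatim as \cite[Theorem~3.1]{BilleraSturmfels93} and used as a black box. So there is nothing in the paper to compare your argument against.

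That said, your sketch is essentially the standard proof one finds in Billera--Gel\cprime fand--Sturmfels and in \cite[\S5.3]{Triangulations}: identify the lineality space of $\SecondaryFan{P}$ with the column space of $V$, use the relation $\transpose{V}V^\star=0$ to pass to $\RR^{n-d-1}$, and then verify that a $d$-simplex $\conv V_J$ is a lower face of the lifted configuration for $\lambda$ if and only if the image $\bar\lambda$ lies in the dual cone $\pos V^\star_{J^\star}$. The only point where one needs to be slightly more careful than you indicate is the passage from top-dimensional cells to arbitrary cells: one should check that the face structure of the chamber complex (common refinement of the dual simplices) matches the adjacency of secondary cones, not just that the maximal cells agree. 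This is routine but not entirely automatic from what you wrote; once it is done, the anti-isomorphism with the boundary of $\SecondaryPolytope{P}$ is indeed just normal-fan duality.
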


A \emph{split} of the polytope $P$ is a polytopal decomposition (without new vertices)
with exactly two maximal cells.  Splits are always regular.  The affine hyperplanes weakly
separating the two maximal cells of a split are characterized by the property that they do
not cut through any edges of $P$ \cite[Observation~3.1]{HerrmannJoswig08}; they are called
\emph{split hyperplanes}.  Two splits of $P$ are \emph{compatible} if their split
hyperplanes do not intersect in the interior of $P$.  They are \emph{weakly compatible} if
they admit a common refinement.  Clearly, compatibility implies weak compatibility, but
the converse is not true; see Example \ref{ex:splitcomplex:cross} below.  By definition
the splits are coarsest subdivisions of $P$ and hence correspond to rays in the secondary
fan or, equivalently, to facets of the secondary polytope and to vertices in the chamber
complex.  The \emph{split complex} $\SplitComplex{P}$ is the abstract flag-simplicial
complex whose vertices are the splits of $P$ which is induced by the compatibility
relation.  The \emph{weak split complex} $\WeakSplitComplex{P}$ is the subcomplex of
$\SecondaryFanOne{P}$ induced by the splits.

\begin{example}\label{ex:splitcomplex:cross}
  Let $P=\conv\smallSetOf{\pm e_i}{i\in[d]}$ be a regular crosspolytope in dimension~$d$.
  The splits of $P$ are given by the coordinate hyperplanes $x_i=0$, for $i\in[d]$. By
  combining any $d-1$ of these splits one gets a triangulation of $P$. This shows that the
  weak split complex is isomorphic to the boundary of a $(d-1)$-simplex.  However, any two
  coordinate hyperplanes contain the origin, whence the corresponding splits are not
  compatible.  The split complex of $P$ has $d$ isolated points.  See also
  \cite[Example~4.9]{HerrmannJoswig08}.
\end{example}

\begin{proposition}\label{prop:split-om}
  The split complex $\SplitComplex{P}$ and the weak split complex $\WeakSplitComplex{P}$
  of a polytope~$P$ only depend on the oriented matroid of $P$.
\end{proposition}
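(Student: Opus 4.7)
The plan is to encode both complexes in terms of the covector system of $V_P$, which is another name for the oriented matroid. First, I would observe that every affine hyperplane $H\subseteq\RR^d$ induces a sign vector on $\Vertices P$ by recording on which side of $H$ each vertex lies, and this is by definition a covector of the oriented matroid of $V_P$. By \cite[Observation~3.1]{HerrmannJoswig08}, $H$ is a split hyperplane iff its covector $C$ satisfies $C_+,C_-\ne\emptyset$ and no edge $\{v_i,v_j\}$ of $P$ has $\{C(v_i),C(v_j)\}=\{+,-\}$. Since the face lattice of $P$, and hence its set of edges, is determined by the oriented matroid, the set of splits of $P$ is OM-invariant.

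Second, I would address pairwise compatibility of two splits $S_1,S_2$ with covectors $C^1,C^2$. They are compatible iff $H_1\cap H_2\cap\interior P=\emptyset$, equivalently iff their common refinement has exactly three full-dimensional maximal cells rather than four. The candidate maximal cells are indexed by sign pairs $(\epsilon_1,\epsilon_2)\in\{+,-\}^2$, and the $(\epsilon_1,\epsilon_2)$-cell is full-dimensional iff some convex combination of vertices of $P$ realizes the prescribed strict signs under the linear functionals defining $H_1,H_2$. By a Farkas-type argument, the solvability of this sign constraint depends only on the covector system of $V_P$, hence is OM-invariant. Because $\SplitComplex{P}$ is a flag complex, it too is OM-invariant.

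For $\WeakSplitComplex{P}$ the argument generalizes directly: a family $\{S_1,\ldots,S_k\}$ spans a face iff the overlay of the hyperplanes $H_1,\ldots,H_k$ induces a subdivision of $P$ with exactly $k+1$ full-dimensional cells, each indexed by a consistent sign pattern across $C^1,\ldots,C^k$ and detected by the same Farkas-type criterion. I expect the main technical obstacle to lie precisely in this translation step: the naive condition ``some $C^1_{\epsilon_1}\cap C^2_{\epsilon_2}$ is empty'' is insufficient, because vertices may sit on split hyperplanes so that convex combinations of $0$-signed vertices produce interior points of unexpected sign patterns. Treating the $C^i_0$-part rigorously forces one to argue via non-negative combinations rather than by inspecting individual vertex incidences. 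Once this is done, both complexes are assembled from data that depend only on the oriented matroid, and the proposition follows.
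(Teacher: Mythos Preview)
Your treatment of the set of splits and of the (strong) split complex is essentially sound and parallels the paper's argument, although the paper encodes incompatibility more directly: a point in $H_{S_1}\cap H_{S_2}\cap\interior P$ yields a \emph{vector} $C$ of the oriented matroid with $C_+\subseteq C^1_0$ and $C_-\subseteq C^2_0$ whose support is not contained in any positive cocircuit. This avoids the somewhat vague ``Farkas-type argument'' you invoke for deciding whether a given open sign cell meets $\interior P$.

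The genuine gap is in your treatment of $\WeakSplitComplex{P}$. The criterion ``the overlay of $H_1,\dots,H_k$ has exactly $k+1$ full-dimensional cells'' characterizes \emph{strong} compatibility, not weak compatibility. Weak compatibility only asks for the existence of \emph{some} common refinement without new vertices, and this refinement need not be the hyperplane overlay, nor need it have $k+1$ maximal cells. The regular octahedron (Example~\ref{ex:splitcomplex:cross}) already shows this: any two of the three coordinate-hyperplane splits are weakly compatible, their common refinement being a triangulation into four tetrahedra, yet the overlay has four full-dimensional cells rather than $k+1=3$. So your criterion would wrongly declare them weakly incompatible. The technical obstacle you anticipate (handling vertices on $C^i_0$) is not the issue; the characterization itself is incorrect. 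The paper instead appeals to the fact that the poset of all subdivisions of $P$, and in particular the existence of common refinements, can be reconstructed from the oriented matroid alone \cite[Corollary~4.1.43]{Triangulations}; this immediately gives OM-invariance of $\WeakSplitComplex{P}$.
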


\begin{proof}
  Each split $S$ of $P$ defines a cocircuit $C^\star$ of the oriented matroid of $P$.  A
  hyperplane which separates $P$ defines a split if and only if it does not separate any
  edge of $P$.  However, an edge of $P$ is a covector of $P$ with exactly two positive
  entries and it is separated by $S$ if and only if one if the entries is in $C^\star_+$
  and the other is in $C_-^\star$. So one sees that the set of splits of $P$ only depends
  on the oriented matroid of $P$.
  
  Now it remains to show that also the compatibility and weak compatibility relations
  among splits only depend on the oriented matroid.
  
  Let $S_1$ and $S_2$ be two splits of $P$ with split hyperplanes $H_{S_1}$ and $H_{S_2}$,
  respectively.  Suppose that $S_1$ and $S_2$ are incompatible.  Then there exists a point
  $x\in\interior P\cap H_{S_1}\cap H_{S_2}$.  Since both split hyperplanes are spanned by
  vertices of $P$ and since, moreover, each split hyperplane does not intersect any edge
  the point $x$ is a convex combination of vertices of $P$ on $H_{S_1}$ as well as a
  convex combination of vertices of $P$ on $H_{S_2}$.  Thus $x$ gives rise to a vector $C$
  in the oriented matroid of $P$ such that $C_+$ is supported on vertices of $P$ lying on
  $H_{S_1}$ and $C_-$ is supported on vertices of $P$ lying on $H_{S_2}$.  That $x$ is
  contained in the interior of $P$ is equivalent to the property that $C_+\cup C_-$ is not
  contained in any facet of $P$.  Since the facets are precisely the positive cocircuits
  it follows that this can be read off from the oriented matroid of $P$.
   
  The statement for the weak split complex follows from the fact that one can construct
  common refinements of given subdivisions while only knowing the oriented matroid of the
  underlying polytope \cite[Corollary 4.1.43]{Triangulations}.
\end{proof}

Note that, of course, knowing the combinatorics, that is the face lattice of a polytope is
not enough for knowing its split complex or even its splits. As an example consider the
regular octahedron (with three splits; see Example~\ref{ex:splitcomplex:cross}) and an
octahedron with perturbed vertices (which does not have any split). Further, note that the
set of regular subdivisions of a polytope does not only depend on the oriented matroid but
rather on the coordinatization.  So the split subdivisions form a subset of all regular
subdivisions which is independent of the coordinatization.  In particular, the split
complex is a common approximation for the secondary fans of all polytopes with the same
oriented matroid but affinely inequivalent coordinates.  The next lemma explains how
splits can be recognized in the chamber complex.  We continue to use the notation
introduced above.  In particular, $P$ is the polytope and $G$ its spherical Gale dual.

\begin{lemma}\label{lem:unique-cycle}
  A point $x\in\Sph^{n-d-2}$ defines a split of $P$ if and only if there exists a unique
  circuit $C$ in $G$ such that $\pos x=\pos V^\star_{C_+} \cap \pos V^\star_{C_-}$.
\end{lemma}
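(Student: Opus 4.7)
The plan is to translate between the primal split data and the dual circuit data via the Gale-duality correspondence between cocircuits of $V_P$ and circuits of $G$. Throughout I would use the preliminary observation that for any circuit $C$ of $G$ with both $C_+$ and $C_-$ nonempty, the minimality of its support forces the circuit relation $\sum_{i\in C_+}\mu_i v_i^\star=\sum_{j\in C_-}\mu_j v_j^\star$ (with $\mu_i>0$) to be unique up to positive scaling; hence $\pos V^\star_{C_+}\cap\pos V^\star_{C_-}$ is the single ray $r_C:=\pos\bigl(\sum_{i\in C_+}\mu_i v_i^\star\bigr)$, and the equality $\pos x=\pos V^\star_{C_+}\cap\pos V^\star_{C_-}$ is equivalent to $\pos x=r_C$.

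For the forward direction, suppose $x$ defines a split $S$ with split hyperplane $H$ and induced partition $(V_+,V_0,V_-)$ of $\Vertices P$. Since $P\cap H$ is a common facet of both maximal cells of $S$, the set $V_0$ affinely spans $H$, so after homogenization $(V_+,V_0,V_-)$ represents a cocircuit of $V_P$, which under Gale duality becomes a circuit $C$ of $G$ with $C_\varepsilon=V_\varepsilon$. The bent lifting function $\lambda_i:=\max(0,av_i)$, with $a$ defining $H$ and $aV_+>0$, induces $S$; writing $\lambda=Va'+V^\star y$ identifies $y$ as a positive multiple of the common vector in the circuit relation and hence gives $\pos x=r_C$.

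For the uniqueness of $C$, suppose $C'$ is a second circuit of $G$ with $\pos x=r_{C'}$. On the one hand $\pos x\in\pos V^\star_{J^\star}$ iff $J\cap C'_+=\emptyset$ or $J\cap C'_-=\emptyset$; on the other hand, since these dual simplices are exactly those indexing $d$-simplices in triangulations refining $S$, the same condition reads $J\subseteq V_+\cup V_0$ or $J\subseteq V_-\cup V_0$. Running over all valid bases $J$ and comparing the two descriptions forces $\{C'_+,C'_-\}=\{V_+,V_-\}$, hence $C'=C$.

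For the converse, assume the unique-circuit condition holds for $C$ and let $H$ be the affine hyperplane spanned by $V_{C_0}$ separating $V_{C_+}$ from $V_{C_-}$. I would argue that $H$ is a split hyperplane by contradiction: given a hypothetical edge $\{u,w\}$ of $P$ with $u\in V_{C_+}$ and $w\in V_{C_-}$, combine the circuit relation of $C$ with the positive cocircuit of $V_P$ witnessing $\{u,w\}$ as an edge, via oriented-matroid elimination in $G$, to produce a second circuit $C'\ne C$ still satisfying $r_{C'}=r_C$, contradicting uniqueness. Once $H$ is known to be a split hyperplane, the forward direction applied to the resulting split shows that its secondary-fan ray is $r_C=\pos x$, so $x$ defines this split. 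The main obstacle will be this edge-to-circuit construction in the converse: producing a genuinely new circuit $C'$ with the same ray $r_C$ from a hypothetical edge cut by $H$ requires a careful use of the oriented-matroid elimination axiom in $G$ together with bookkeeping of supports and signs, and this is where the Gale-duality/oriented-matroid formalism has to do real work in the argument.
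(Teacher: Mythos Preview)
Your forward direction is close to the paper's, though the paper is terser: it simply notes that the split hyperplane $H_S$ is a cocircuit of $V_P$, hence a circuit $C$ of $G$, and that the two maximal cells of $S$ are $\conv V_{(C_+)^\star}$ and $\conv V_{(C_-)^\star}$, whose dual cones meet in $\pos x$. For uniqueness the paper does \emph{not} argue via dual-simplex membership as you do; it observes that a second circuit $C'$ would correspond to a second primal hyperplane $H'$ inducing the same separation of $x$'s chamber, forcing $H'=H_S$ and hence $C'=C$. Your equivalence ``$x\in\pos V^\star_{J^\star}$ iff $J\cap C'_+=\emptyset$ or $J\cap C'_-=\emptyset$'' only has one direction immediate for an arbitrary $C'$ with $r_{C'}=\pos x$; the other needs justification, so the paper's route is both shorter and safer here.

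The converse is where the two approaches genuinely diverge. The paper does not try to show that $H$ avoids edges of $P$. Instead it argues directly in the chamber complex: $x$ is visibly a vertex of $\ChamberComplex{P}$, hence corresponds to a coarsest subdivision, and then \cite[Lemma~3.2]{BilleraSturmfels93} is invoked to say that the uniqueness of $C$ forces $\pos V^\star_{C_+}$ and $\pos V^\star_{C_-}$ to be the \emph{only} minimal dual cells containing $x$, so the subdivision has exactly two maximal cells. That is the whole argument.

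Your proposed route---assume an edge $\{u,w\}$ is cut and manufacture a second circuit $C'$ with $r_{C'}=r_C$ via elimination---is not obviously workable as stated. The positive vector of $G$ witnessing the edge is supported on $[n]\setminus\{u,w\}$, so it contains neither $u$ nor $w$; combining it with the circuit relation of $C$ does not let you eliminate $u$ (or $w$) from either positive representation of $x$, and it is unclear how elimination produces a circuit whose balance point is still exactly $x$. The obstacle you flag is real, and the paper simply sidesteps it by appealing to the Billera--Gel\cprime fand--Sturmfels description of cells of the secondary fan. If you want a self-contained argument, that lemma is the missing ingredient, not oriented-matroid elimination.
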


\begin{proof}
  Consider $x\in\Sph^{n-d-2}$ such that its chamber is dual to a split $S$ of $P$.  Then
  the split hyperplane $H_S$ defines a unique cocircuit $C$ of $P$.  Equivalently, $C$ is
  a circuit of $G$.  Moreover, $\pos V^\star_{C_+}$ and $\pos V^\star_{C_-}$ correspond to
  the two maximal cells of $S$, and $\pos x=\pos V^\star_{C_+} \cap \pos
  V^\star_{C_-}$. Suppose that there is another circuit $C'$ in $G$ with the same
  property. Then the hyperplane $H$ defined by the elements of the corresponding cocircuit of
  $V_P$ separates the preimage of $x$ from all remaining vertices of $P$. However, since
  $x$ defines a split $S$ we get $H=H_S$ and hence the uniqueness.

  Conversely, let $C$ be the unique circuit of $G$ such that $\pos x=\pos V^\star_{C_+}
  \cap \pos V^\star_{C_-}$ for some $x\in\Sph^{n-d-2}$.  Obviously, $x$ is a ray of the
  chamber complex, and hence it is dual to a coarsest subdivision $S$ of $P$.  By
  \cite[Lemma~3.2]{BilleraSturmfels93}, the subdivision corresponding to $x$ has two
  maximal cells, since $\pos V^\star_{C_+}$ and $\pos V^\star_{C_-}$ are the only
  (necessarily minimal) dual cells containing~$x$.
  \end{proof}

\begin{example}\label{ex:pentagon:2}
  Let $P$ be the pentagon and $G$ its Gale dual from Example~\ref{ex:pentagon:1}.  Then
  $C=(0+0--)$ is a cocircuit of $P$ corresponding to the split defined by the line through
  the vertices $v_1$ and $v_3$.  Clearly, $C$ is also a circuit of $G$, with $C_+=\{2\}$
  and $C_-=\{4,5\}$.  We have $\pos v_2^\star=\pos V^\star_{\{2\}}\cap\pos
  V^\star_{\{4,5\}}$, and $C$ is the unique circuit of $G$ yielding $\pos v_2^\star$ as
  the intersection of its positive and its negative cone.  The two maximal cells of the
  split are the quadrangle $\conv V_{\{2\}^\star}$ and the triangle $\conv
  V_{\{4,5\}^\star}$.  See Figure~\ref{fig:pentagon}.
\end{example}

With each split $S$ of $P$ we associate the unique circuit $C[S]$ of $G$ from
Lemma~\ref{lem:unique-cycle}. If $V^\star_{C[S]_+}$ or ($V^\star_{C[S]_-}$) consists of a
single element $v^\star$ corresponding to a vertex $v$ of $P$, we call $S$ the
\emph{vertex split} for the vertex $v$ and also write $C[v]$ for $C[S]$. Note that the
support of $C[v]$ corresponds to the set of all vertices of $P$ that are connected to $v$
by an edge.

\begin{lemma}\label{lem:compatible-vertex-splits}
  Let $S$ and $S'$ be vertex splits with respect to vertices $v$ and $v'$ of $P$.  Then
  $S$ and $S'$ are compatible if and only if $v$ and $v'$ are not joined by an edge.
\end{lemma}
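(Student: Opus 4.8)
The plan is to argue directly with the split hyperplanes and the definition of (strong) compatibility, using the circuit $C[v]$ of the Gale dual only to encode the combinatorics of a vertex split. First I would record some elementary features of a vertex split $S$ for a vertex $v$, with split hyperplane $H_S$. Since $V^\star_{C[v]_+}$ is a single dual vector, $v$ is the unique vertex of $P$ lying strictly on one open side $H_S^+$ of $H_S$, and every other vertex lies on $H_S$ or strictly on the opposite side $H_S^-$. Next, because a split hyperplane cuts no edge of $P$, the maximal cell $P\cap\overline{H_S^+}$ acquires no new vertices, so its vertex set is $\{v\}\cup(H_S\cap\Vertices P)$; moreover $P$ and this cell coincide in a neighbourhood of $v$ (which sits in the open halfspace $H_S^+$), so they have the same edges at $v$. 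As $P\cap\overline{H_S^+}$ is a pyramid with apex $v$ over the base $P\cap H_S$, the apex is adjacent to every base vertex; combined with the (clear) reverse inclusion that no edge at $v$ is cut by $H_S$, this gives $H_S\cap\Vertices P=N(v)$, the set of neighbours of $v$, whence $H_S=\aff N(v)$ and $P\cap H_S=\conv N(v)$. Finally I would use the standard fact that a hyperplane $H$ slicing $P$ into two full-dimensional pieces satisfies $H\cap\interior P=\operatorname{relint}(P\cap H)\subseteq\interior P$; consequently two such hyperplanes $H_S,H_{S'}$ meet inside $\interior P$ if and only if $P\cap H_S$ is not contained in one closed halfspace bounded by $H_{S'}$ (a condition symmetric in $S$ and $S'$).

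Granting this, the ``if'' direction is immediate. Assume $v$ and $v'$ are not joined by an edge; if $S=S'$ there is nothing to prove, so take $S\ne S'$. Every vertex of $P\cap H_S$ lies in $N(v)$, hence is distinct from $v$, and --- since $v'\notin N(v)$ --- also from $v'$; by the first observation applied to $S'$, each such vertex therefore lies in $\overline{H_{S'}^-}$. Thus $P\cap H_S\subseteq\overline{H_{S'}^-}$, the split hyperplanes do not meet inside $P$, and $S$ and $S'$ are compatible.

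For the ``only if'' direction assume $v\sim v'$, so $v'\in N(v)\subseteq H_S$ and $v\in N(v')\subseteq H_{S'}$, while $v\notin H_S$; in particular $H_S\ne H_{S'}$. The key claim is that $v$ has a neighbour outside $\{v'\}\cup N(v')$, or $v'$ has a neighbour outside $\{v\}\cup N(v)$. If both failed we would have $N(v)\subseteq\{v'\}\cup N(v')$ and $N(v')\subseteq\{v\}\cup N(v)$, which together with $v\in N(v')$ and $v'\in N(v)$ forces $\{v\}\cup N(v)=\{v'\}\cup N(v')$. By the second observation this would give $P\cap\overline{H_S^+}=\conv(\{v\}\cup N(v))=\conv(\{v'\}\cup N(v'))=P\cap\overline{H_{S'}^+}$, hence also $P\cap\overline{H_S^-}=\overline{P\setminus(P\cap\overline{H_S^+})}=P\cap\overline{H_{S'}^-}$; but $v\in P$ lies in $\overline{H_{S'}^-}$ and not in $\overline{H_S^-}$, a contradiction. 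So, say, $v'$ has a neighbour $w'\notin\{v\}\cup N(v)$: then $w'\ne v$ is not a neighbour of $v$, so by the first observation $w'$ lies strictly on $H_S^-$, whereas $v$ lies strictly on $H_S^+$; and $v,w'\in N(v')=\Vertices(P\cap H_{S'})$. Hence $H_S$ strictly separates two vertices of the $(d-1)$-dimensional polytope $P\cap H_{S'}$, so $H_S$ meets $\operatorname{relint}(P\cap H_{S'})\subseteq\interior P$, giving $H_S\cap H_{S'}\cap\interior P\ne\emptyset$; thus $S$ and $S'$ are incompatible.

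The main obstacle I anticipate is precisely the combinatorial claim ruling out $\{v\}\cup N(v)=\{v'\}\cup N(v')$ in the last paragraph: without it the two vertex splits could a priori be ``nested'' with non-crossing hyperplanes, hence compatible even though $v\sim v'$. The resolution above is that coincidence of the positive cells forces the negative cells to coincide as well, which collides with $H_S\ne H_{S'}$. Everything else is the bookkeeping of the two observations together with the elementary slicing fact for hyperplanes, and one could equally phrase the whole argument on the Gale side via Lemma~\ref{lem:unique-cycle} and the chamber complex.
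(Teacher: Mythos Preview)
Your argument is correct and takes a genuinely different route from the paper. The paper's proof is purely combinatorial on the oriented-matroid side: it first records the general criterion that two splits $S,S'$ are compatible if and only if, after a suitable sign choice, $C[S]_+\subseteq C[S']_+$ and $C[S']_-\subseteq C[S]_-$, and then specialises this to vertex splits by observing that $C[v]_+=\{v^\star\}$ and that $v^\star\in C[v']_0$ precisely when $v$ and $v'$ are adjacent. Your proof, by contrast, stays entirely on the primal side, establishing first that for a vertex split $H_S\cap\Vertices P=N(v)$ and then arguing directly with the geometry of the two slices of $P$. The paper's approach is shorter and yields the reusable circuit-inclusion criterion for compatibility of arbitrary splits, at the cost of invoking the Gale machinery and the ``easily seen'' criterion without justification. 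Your approach is more self-contained and elementary, needing nothing beyond basic convexity and the definition of compatibility; its only nontrivial ingredient is the pleasant observation that $\{v\}\cup N(v)=\{v'\}\cup N(v')$ would force the two splits to coincide, which you dispatch cleanly via the negative cells. Either proof would be acceptable here; the paper's choice fits its running Gale-dual narrative, while yours would read naturally in a treatment that postpones or avoids Gale duality.
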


\begin{proof}
  It is easily seen that two splits $S,S'$ are compatible if and only if (possibly after
  the negation of one or both of the circuits) $C[S]_+\subseteq C[S']_+$ and
  $C[S']_-\subseteq C[S]_-$.  For a vertex split with respect to the vertex $v$ we have
  $C[v]_+=\{v^\star\}$ or $C[v]_-=\{v^\star\}$.  However, if $v$ and $v'$ are joined by an
  edge, then $v^\star \in C[v']_0$, so the above conditions cannot hold. On the other hand,
  if $v$ and $v'$ are not joined by an edge, and, say, $C[v]_+=\{v^\star\}$, then (possibly
  after a negation) $v^\star\in C[v']_+$ which implies $\{v^\star\}=C[v]_+\subseteq C[v']_+$.
\end{proof}

Clearly, $P$ admits a vertex split at the vertex $v$ if and only if the neighbors of $v$
in the vertex-edge graph of $P$ lie on a common hyperplane.  In particular, if $P$ is
simple then each vertex gives rise to a vertex split.

\section{Totally Splittable Polytopes}

We call a polytope \emph{totally splittable} if all regular triangulations of $P$ are
split triangulations.  We aim at the following complete characterization.

\begin{theorem}\label{thm:totally-splittable-classification}
  A polytope $P$ is totally splittable if and only if it has the same oriented matroid as
  a simplex, a crosspolytope, a polygon, a prism over a simplex, or a (possibly multiple)
  join of these polytopes.
\end{theorem}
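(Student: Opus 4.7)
The plan is to prove both directions; sufficiency is a case-by-case verification, while the heart of the argument is the converse.

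\textbf{Sufficiency.} I would check the listed families directly. A simplex has only the trivial triangulation. Every triangulation of a polygon consists of non-crossing diagonals, each of which is a split. For the regular crosspolytope, Example~\ref{ex:splitcomplex:cross} already shows that every triangulation is a common refinement of $d-1$ of the coordinate splits. For a prism $\Delta\times[0,1]$ over a simplex, the staircase triangulations are exactly the common refinements of the ``diagonal'' splits that iteratively cut off a simplex. For joins, I would use that the Gale dual of $P\join Q$ is the disjoint spherical union of $\GaleDual{P}$ and $\GaleDual{Q}$, and deduce via Lemma~\ref{lem:unique-cycle} that the splits of the join are exactly the splits of the two factors, and triangulations of the join are joins of triangulations of the factors; so total splittability is preserved.

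\textbf{Necessity: reduction to the proper case and passage to the Gale dual.} Assume $P$ is totally splittable. By Proposition~\ref{prop:vertex-splits}, the neighbors of every vertex lie in a common affine hyperplane, so $P$ admits a vertex split at every vertex. If $P$ is a pyramid over $P'$, then $P=P'\join\{v\}$ is a join with a simplex and total splittability descends to $P'$, so I may assume $P$ is proper and work with the spherical Gale dual $G\subset\Sph^{n-d-2}$. By Lemma~\ref{lem:unique-cycle}, total splittability is equivalent to the condition that every ray of $\ChamberComplex{P}$ arises from a \emph{unique} circuit of $G$ as an intersection of its positive and negative cones; this is a very rigid combinatorial constraint on the spherical positions of the dual points.

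\textbf{Main dichotomy and induction.} I would then ask whether $G$ decomposes as a spherical join, i.e.\ whether its points partition across two complementary great subspheres that meet in no dual point. If so, the oriented matroid of $P$ is a direct sum and $P$ itself is a join of two proper lower-dimensional polytopes, each of which inherits total splittability; induction on dimension applies. Otherwise $P$ is ``maximally totally splittable'' in the sense of the introduction: it is not a non-trivial join, and moreover no external vertex can be added while preserving total splittability. The observation flagged in the introduction says that for $d\ge 3$ neither a prism over a $(d-1)$-simplex nor a regular crosspolytope can be extended in this way, which gives a geometric mechanism to terminate the induction. Combining this extremality with Lemma~\ref{lem:compatible-vertex-splits} (which ties the compatibility graph of the vertex splits to the vertex-edge graph of $P$) should force the oriented matroid of any maximally totally splittable $P$ to match one of the four templates: a simplex, a polygon, a prism over a simplex, or a regular crosspolytope. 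Each template is then pinned down from $G$ by reading off the circuit structure.

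\textbf{Main obstacle.} The technical difficulty explicitly flagged by the authors is intrinsically distinguishing the case $d=2$ (polygons) from $d\ge 3$: polygons admit vastly many compatible vertex splits and their Gale duals live on $\Sph^{n-3}$, which is topologically and combinatorially different from the higher-dimensional sphere. I expect the crux to be a careful circuit-by-circuit analysis of $G$ — based on the number and mutual positions of antipodal pairs of dual points — that cleanly separates the polygon regime from the crosspolytope-plus-prism regime and matches each to exactly one template up to oriented-matroid equivalence.
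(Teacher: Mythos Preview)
Your sufficiency sketch is fine and matches the paper. The gap is in the necessity direction, specifically in your ``main dichotomy.'' You assert that if $G$ is not a spherical join then $P$ is \emph{maximally totally splittable}, i.e.\ no external vertex can be added. This inference is false: a pentagon is totally splittable, is not a join, and yet can be extended to a hexagon which is again totally splittable. So ``not a join'' does not by itself yield extremality, and your plan to immediately pin down the oriented matroid from extremality plus Lemma~\ref{lem:compatible-vertex-splits} has no starting point.

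The paper runs the argument in the opposite direction. Assuming $P$ is not a join, it \emph{deletes} vertices one at a time: pick $v$ whose dual $v^\star$ is neither a double point nor an antipodal point in $G$, and pass to $Q=\conv(\Vertices P\setminus\{v\})$, which is totally splittable by Proposition~\ref{prop:vertex-splits}. The delicate step (Proposition~\ref{prop:final}) is that $Q$ is again not a join; this is what lets the reduction proceed. Iterating, one reaches $P'$ whose Gale dual $G'$ consists only of double and antipodal points. A separate argument (Lemma~\ref{lem:join-double-points}) shows that in a non-join totally splittable polytope the double and single points cannot coexist, so either every point of $G'$ is double (forcing a crosspolytope, Lemma~\ref{lem:Gale-cross}) or every point is antipodal (forcing a prism, Lemma~\ref{lem:Gale-prism-simplex}), with the overlap being the quadrangle. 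Only \emph{then} do the non-extendability Lemmas~\ref{lem:cross_ext} and~\ref{lem:prism_ext} enter, to conclude $P=P'$ when $d\ge 3$; for $d=2$ one simply has a polygon. Your proposal mentions antipodal pairs but never isolates the double-point/antipodal-point dichotomy, and it lacks both the deletion mechanism and the ``deletion preserves non-join'' step, which together are the engine of the proof.
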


By Proposition \ref{prop:split-om} the set of splits and their (weak) compatibility only
depends on the oriented matroid of $P$, and hence the notion ``totally splittable'' also
depends on the oriented matroid only.  The \emph{join} $P*Q$ of a $d$-polytope $P$ and an
$e$-polytope $Q$ is the convex hull of $P \cup Q$, seen as subpolytopes in mutually
skew affine subspaces of $\RR^{d+e+1}$.  For instance, a $3$-simplex is the join of any
pair of its disjoint edges.  In order to avoid cumbersome notation in the remainder of this
section we do not distinguish between any two polytopes sharing the same oriented matroid.
For instance, ``$P$ is a join of $P_1$ and $P_2$'' actually means ``$P$ has the same
oriented matroid as the join of $P_1$ and $P_2$'' and so on.

\begin{example}\label{ex:totally-splittable}
  We inspect the classes of polytopes occurring in
  Theorem~\ref{thm:totally-splittable-classification}.
  \begin{enumerate}
  \item Simplices are totally splittable in a trivial way.
  \item A triangulation of an $n$-gon is equivalent to choosing $n-3$ diagonals which are
    pairwise non-intersecting.  This is a compatible system of splits, and hence each
    polygon is totally splittable; see \cite[Example~4.8]{HerrmannJoswig08}. The secondary
    polytope of an $n$-gon is the $(n-3)$-dimensional associahedron \cite[Chapter~7,
    \S3.B]{GKZ94}.
  \item \label{ex:totally-splittable:cross} Let $P=\conv\smallSetOf{\pm e_i}{i\in[d]}$ be
    a regular crosspolytope in dimension~$d$ as in Example~\ref{ex:splitcomplex:cross}.
    The splits correspond to the coordinate hyperplanes, and any $d-1$ of them induce a
    triangulation of $P$.  Conversely, each triangulation of $P$ arises in this way.  See
    \cite[Example~4.9]{HerrmannJoswig08}.  A Gale dual of $P$ is given by the multiset $G
    \subset \Sph^{d-2}$ consisting of all points
    \[
    \bigSetOf{e_i}{i\in[d-1]}\cup\bigl\{-\frac{1}{\sqrt{d-1}}\sum_{i=1}^{d-1}e_i\bigr\} \, ,
    \]
    where each point occurs exactly twice.  All the vertices in the chamber complex
    correspond to vertex splits, and the chamber complex is the normal fan of a
    $(d-1)$-simplex (where each vertex carries two labels). So the secondary polytope of
    $P$ is a $(d-1)$-simplex. See Figure~\ref{fig:octahedron_prism} (left) below for
    $d=3$.
  \item Let $P$ be the prism over a $(d-1)$-simplex.  Then the dual graph of any
    triangulation of $P$ is a path with $d$ nodes.  The secondary polytope of $P$ is
    the $(d-1)$-dimensional permutohedron \cite[Chapter~7, \S3.C]{GKZ94}.  See
    Figure~\ref{fig:octahedron_prism} (right) below for $d=3$.
  \end{enumerate}
\end{example}

\begin{figure}[htb]
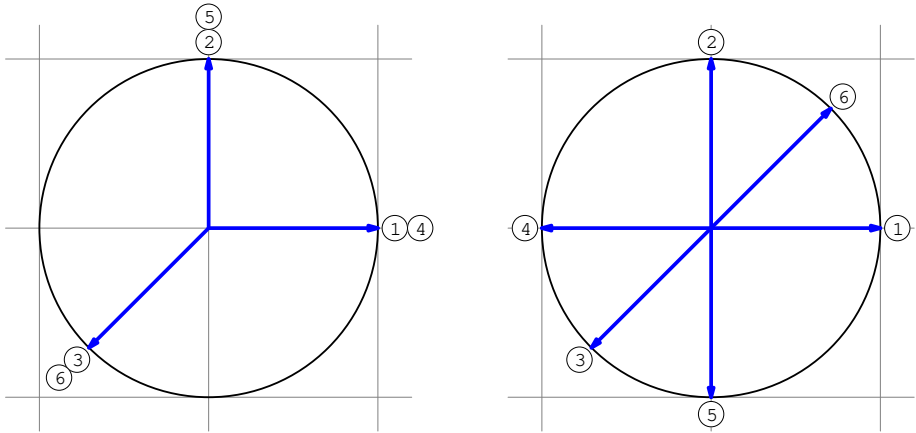

  \includegraphics[scale=.75]{gale.0} \qquad \includegraphics[scale=.75]{gale.1}
  \caption{Gale diagrams of the regular octahedron (left) and of the prism over a triangle
    (right).}
  \label{fig:octahedron_prism}
\end{figure}

\begin{remark}
  As the secondary polytope of a join of polytopes is the product of their secondary polytopes
  (e.g., this can be inferred from \cite[Corollary~4.2.8]{Triangulations}), 
  Theorem~\ref{thm:totally-splittable-classification} and Example~\ref{ex:totally-splittable} show
  that the secondary polytopes
  of totally splittable polytopes are (possibly multiple) products of simplices, permutohedra, and
  associahedra.
\end{remark}

\begin{remark}
  One can ask the question: What is the typical behavior of a polytope in terms of splits?
  The smallest example of a polytope that does not have any split is given by an
  octahedron whose vertices are slightly perturbed into general position.  Moreover, any
  \emph{$2$-neighborly polytope} (that is, any two vertices share an edge) does not admit
  any split \cite[Proposition~3.4]{HerrmannJoswig08}. On the other hand, $d$-dimensional
  simple polytopes with $n$ vertices have at least $n$ splits: Each vertex is connected to
  exactly $d$ other vertices which span a split hyperplane for the corresponding vertex
  split. This shows that the answer of the seemingly more precise question of how many
  splits is a ``random polytope'' expected to have highly depends on the chosen model. On
  the one hand, a $d$-polytope whose facets are chosen uniformly at random tangent to the
  unit sphere is simple with probability one; hence it has at least as many splits as
  vertices.  On the other hand one can choose models such that the polytopes generated are
  $2$-neighborly with high probability \cite{Shemer82}; such polytopes do not have any
  splits.
\end{remark}

It is obvious that total splittability is a severe restriction among polytopes.  The
following result is a key first step.  As an essential tool we use that any ordering of
the vertices of a polytope induces a triangulation, the \emph{placing triangulation} with
respect to that ordering \cite[\S4.3.1]{Triangulations}.  Moreover, successive placing of
new vertices can be used to extend any triangulation of a subpolytope.

\begin{proposition}\label{prop:vertex-splits}
  Let $P$ be a totally splittable polytope.  Then each face, each vertex figure, and each
  subpolytope $Q:=\conv (V\setminus\{v\})$ for a vertex $v\in V$ is totally splittable.
  Moreover, $v$ gives rise to a vertex split, and the neighbors of $v$ span a
  facet of $Q$.
\end{proposition}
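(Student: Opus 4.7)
My plan is to handle the three ``totally splittable'' claims by a uniform extension argument, and then use a placing triangulation to derive the structural claims about $v$.

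For the first stage, given a regular triangulation $T_X$ of a sub-object $X$ (a face $F$, the vertex figure, or the subpolytope $Q$), I would extend $T_X$ by successive placing of the missing vertices (using the tool mentioned immediately before the proposition) to a regular triangulation $T$ of $P$. Total splittability of $P$ then yields $T=\bigwedge_i S_i$ for splits $S_i$ of $P$. Since restriction to $X$ commutes with common refinement, one has $T_X=T|_X=\bigwedge_i(S_i|_X)$. It remains to verify that each non-trivial $S_i|_X$ is a split of $X$, i.e.\ that $H_{S_i}\cap\aff X$ does not strictly separate the endpoints of any edge of $X$. For a face $F$, every edge of $F$ is an edge of $P$ and so is not cut by any $H_{S_i}$. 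For $Q$, new edges may appear on the boundary created by removing $v$, but every edge of $Q$ lies in $\partial Q$ and hence occurs in any triangulation $T_Q$ (a fortiori in $T$); since $T$ refines each $S_i$, no such edge is strictly cut. The vertex figure case is analogous, using the correspondence between its triangulations and those of $P$ with a controlled star at $v$.

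For the claims about $v$, I would take the placing triangulation $T$ of $P$ with $v$ placed last, so that $T=T_Q\cup\{\{v\}*\sigma : \sigma$ is a $(d-1)$-simplex of $T_Q$ on a visible facet of $Q$ from $v\}$ for some regular $T_Q$. Writing $T=\bigwedge_i S_i$, the key point is that every split $S_i$ with $v\notin H_{S_i}$ forces each simplex of $T$ containing $v$ into the single $S_i$-cell that contains $v$. If no $S_i$ in the refinement passes through $v$, then the only cell of the common refinement containing $v$ is a single $d$-simplex $\{v\}*\tau$. This forces the visible region of $\partial Q$ from $v$ to be precisely the $(d-1)$-simplex $\tau$, a single simplicial facet of $Q$; its $d$ vertices are the horizon vertices, which coincide with the neighbors of $v$ in $P$. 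The hyperplane $\aff\tau$ is then the split hyperplane of the desired vertex split at $v$: it contains all these neighbors but not $v$, and it does not cut any edge of $P$ because it supports the facet $\tau$ of $Q$.

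The main obstacle is ruling out the possibility that some split in the refinement does pass through $v$, since in that case the star of $v$ need not reduce to a single simplex. Because the visible region of $\partial Q$ from $v$ depends only on $v$ and $Q$, not on $T_Q$, it suffices to show that a visible region consisting of more than one facet, or of a non-simplex facet, is incompatible with the total splittability of $P$. My preferred approach is to vary $T_Q$ and observe that the splits through $v$ required to refine the various resulting placings must be mutually compatible in a very rigid way, forcing the visible region to collapse to a single simplicial facet. Alternatively, by Lemma~\ref{lem:unique-cycle} a vertex split at $v$ corresponds to a circuit of $\GaleDual{P}$ whose positive part is $\{v^\star\}$; the total splittability of $Q$ (established in the first stage) restricts $\GaleDual{Q}$ enough to produce such a circuit in $\GaleDual{P}$, completing the argument.
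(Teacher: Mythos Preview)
Your Stage~1 for faces and for $Q$ is essentially the paper's argument, and your observation that every edge of $Q$ appears in $T$ (hence is not cut by any $H_{S_i}$) is correct and in fact cleaner than what the paper writes. The vertex-figure case, however, is not really ``analogous'': a split hyperplane of $P$ through $v$ need not induce a split of the vertex figure in any obvious way. The paper handles the vertex figure only \emph{after} Stage~2, by observing that it is affinely isomorphic to the facet $Q\cap v^\perp$ of $Q$, whence the face result applies.

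Your Stage~2 has a genuine gap, and the obstacle you identify is not a phantom. Take $P$ the regular octahedron and $v=e_3$. Then $Q$ is a square pyramid and the region of $\partial Q$ visible from $v$ is the square base, which is \emph{not} a simplex. Any placing triangulation $T$ of $P$ with $v$ last is a common refinement of two coordinate-hyperplane splits, one of which necessarily passes through $v$. So the case you want to rule out genuinely occurs, and your target conclusion (``the visible region is a single simplicial facet'') is false in general; the proposition only claims that the neighbors span a facet, not a \emph{simplex} facet. Your proposed workarounds do not close the gap: varying $T_Q$ cannot force the visible region to shrink (it is determined by $v$ and $Q$ alone), and invoking Lemma~\ref{lem:unique-cycle} via $\GaleDual{Q}$ presupposes structural information about $Q$ you have not yet established.

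The paper's fix is simple and avoids the case split entirely. Instead of analyzing all the $S_i$ at once, pick any one $d$-simplex $\sigma=v*\tau$ containing $v$ and look at the single hyperplane $H=\aff\tau$. Since $\tau$ is an interior codimension-one cell of the split triangulation $T$, it must span some $H_{S_i}$; so $H$ is a split hyperplane of $P$, and by construction $v\notin H$. The crucial extra observation is that $\tau$ lies in a visible facet of $Q$, so $H$ is a \emph{supporting} hyperplane of $Q$. Hence every simplex $v*\tau'$ of $T$ through $v$ lies in $H^v$, while $\tau'\subset Q\subset H^{-v}$, forcing $\tau'\subset H$. Thus the whole visible region collapses to the single facet $F=Q\cap H$ (possibly non-simplex), the neighbors of $v$ are exactly $\Vertices F$, and $H$ is the vertex-split hyperplane. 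One split suffices; there is no need to control the others.
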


\begin{proof}
  Let $\Delta$ be an arbitrary triangulation of a facet $F$ of $P$. We have to show that
  $\Delta$ is induced by splits of $F$.  By placing the vertices of $P$ not in $F$ in an
  arbitrary order we can extend $\Delta$ to a triangulation $\Delta'$ of $P$. As $P$ is
  totally splittable $\Delta'$ is induced by splits of $P$. A split of $P$ either does not
  separate $F$, or it is a split of $F$.  This implies that $\Delta$ is induced by splits
  of $F$.  Inductively, this shows the total splittability of all faces of $P$.
  
  Consider the subpolytope $Q:=\conv (V\setminus\{v\})$ for some vertex $v$ of $P$.  We
  can assume that $P$ is not a simplex, whence $Q$ is full-dimensional. Take an arbitrary
  triangulation $\Sigma$ of $Q$.  By placing $v$ this extends to a triangulation $\Sigma'$
  of $P$.  The $d$-simplices of $\Sigma'$ containing $v$ are the cones (with apex $v$)
  over those codimension $(d-1)$-faces of $\Sigma$ which span a hyperplane weakly
  separating $Q$ from $v$.  By assumption, $\Sigma'$ is a split triangulation, and hence
  each interior cell of codimension one spans a split hyperplane.  Fix a $d$-simplex
  $\sigma\in \Sigma'$ containing $v$.  The facet of $\sigma$ not containing $v$ is an
  interior cell of codimension one, which is why it spans a split hyperplane $H$.  Since
  $H$ cannot cut through the other simplices in $\Sigma'$ all neighbors of $v$ in the
  vertex-edge graph of $P$ are contained in $H$.  This proves that $H$ is the split
  hyperplane of the vertex split to $v$, and $H$ intersects $Q$ in a facet.  This also
  shows that the triangulation $\Sigma$ of $Q$ is induced by splits of $Q$, and $Q$ is
  totally splittable.

  The vertex figure of $P$ at $v$ is affinely equivalent to the facet $Q\cap H$ of $Q$,
  and hence the total splittability of the vertex figure follows from the above.
\end{proof}

\begin{remark}
  The same argument as in the proof above shows: Each hyperplane spanned by $d$ affinely
  independent vertices of a totally splittable polytope defines a facet or a split.
\end{remark}

Note that there exist polytopes for which each vertex defines a vertex split, but which
are not totally splittable.  An example is the $3$-cube which is simple, and hence each
vertex defines a vertex split \cite[Remark~3.3]{HerrmannJoswig08}, but which has several
triangulations which are not induced by splits \cite[Examples~3.8
and~4.10]{HerrmannJoswig08}. It is crucial that, by Proposition~\ref{prop:vertex-splits},
the neighbors of a vertex $v$ of a totally splittable polytope span a hyperplane, which we
denote by $v^\perp$.  two vertices of $P$ are \emph{neighbors} if they share an edge $w$
in the vertex-edge graph of $P$. Proposition~\ref{prop:vertex-splits} makes it possible to
re-read Lemma~\ref{lem:unique-cycle} as follows.

\begin{corollary}\label{cor:perp_intersection}
  Let $v$ be a vertex of a totally splittable polytope $P$.  Then
  \[
  v \ \in \ \bigcap_{\text{$w$ neighbor vertex to $v$}}w^\perp \, .
  \]
\end{corollary}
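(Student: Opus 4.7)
The plan is to deduce the corollary essentially immediately from Proposition~\ref{prop:vertex-splits}, using only the symmetry of the neighbor relation. By Proposition~\ref{prop:vertex-splits} applied to an arbitrary vertex $w$ of the totally splittable polytope~$P$, the vertex $w$ gives rise to a vertex split, and the corresponding split hyperplane $w^\perp$ is precisely the affine hyperplane spanned by the neighbors of~$w$. In particular, $w^\perp$ really is a hyperplane, so the intersection on the right-hand side of the claim is well defined.

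Now fix a vertex $v$ of $P$ and let $w$ be an arbitrary neighbor of~$v$. Because ``sharing an edge'' is a symmetric relation on the vertex-edge graph of~$P$, $v$ is itself a neighbor of~$w$, and is therefore one of the vertices spanning $w^\perp$. Hence $v\in w^\perp$. Intersecting over all neighbors $w$ of $v$ yields the desired statement.

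Translated into the Gale-dual language of Lemma~\ref{lem:unique-cycle}, the condition $v\in w^\perp$ becomes $v^\star\in C[w]_0$, i.e.\ $v^\star$ lies in the cosupport of the circuit $C[w]$ associated with the vertex split at~$w$; this is exactly why the paper advertises the corollary as a re-reading of Lemma~\ref{lem:unique-cycle}. The only substantive input is the fact that each vertex of a totally splittable polytope is the apex of a vertex split with split hyperplane spanned by its neighbors, which has already been established in Proposition~\ref{prop:vertex-splits}; consequently there is no real obstacle to overcome here.
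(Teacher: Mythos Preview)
Your proof is correct and matches the paper's intended argument: the paper states the corollary without proof, deriving it directly from Proposition~\ref{prop:vertex-splits} via the observation that $w^\perp$ is spanned by the neighbors of $w$ and that being neighbors is symmetric. Your Gale-dual remark is a reasonable gloss on the paper's phrase ``re-read Lemma~\ref{lem:unique-cycle}'', though note the paper's convention indexes $C[w]_0$ by vertices of $P$ rather than their duals.
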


\begin{remark}\label{rem:beyond}
  In the situation of Proposition~\ref{prop:vertex-splits} all facets of $Q$ are also
  facets of $P$ except for the facet $F$ spanning the hyperplane $v^\perp$.  Moreover, all
  vertices of $Q$ are also vertices of $P$.  In this situation we say that $v$ is
  \emph{almost beyond} the facet $F$ of $Q$.  This is slightly more general than requiring
  $v$ to be \emph{beyond} $Q$, which means that $F$ is the unique facet of $Q$ violated by
  $v$, and additionally $v$ is not contained in any hyperplane spanned by a facet of $Q$.
  That $F$ is \emph{violated} by $v$ means that the closed affine halfspace with boundary
  hyperplane $\aff F$ does not contain the point $v$.  If $v$ is beyond $F$ and $d=\dim
  P=\dim Q\ge 3$ then the vertex-edge graph of $Q$ is the subgraph of the vertex-edge
  graph of $P$ induced on $\Vertices{P}\setminus\{v\}=\Vertices{Q}$.  The neighbors of $v$
  are precisely the vertices on the facet $F$ of $Q$.
\end{remark}

\begin{lemma}\label{lem:join}
  For two polytopes $P$ and $Q$ the join $P\join Q$ is totally splittable if and only if
  both $P$ and $Q$ are.
\end{lemma}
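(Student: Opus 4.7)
The plan is to prove both directions separately. For the forward direction, I would observe that $P$ and $Q$ are themselves faces of $P\join Q$ (obtained by joining with the empty face), so Proposition~\ref{prop:vertex-splits} immediately yields that both $P$ and $Q$ are totally splittable whenever $P\join Q$ is.

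For the converse, the strategy rests on two structural observations about joins. First, every triangulation $\Delta$ of $P\join Q$ (without new vertices) decomposes as $\Delta_P \join \Delta_Q$ for some triangulations $\Delta_P$ of $P$ and $\Delta_Q$ of $Q$: since $P$ and $Q$ sit in skew affine subspaces, any $(d+e+1)$-simplex on vertices of $P\join Q$ is forced to use exactly $d+1$ affinely independent vertices from $\Vertices P$ and exactly $e+1$ from $\Vertices Q$, so reading off the $\Vertices P$- and $\Vertices Q$-parts of each maximal simplex of $\Delta$ produces the desired triangulations of the two factors. Second, if $S$ is a split of $P$ with split hyperplane $H\subset\aff P$, then $\tilde H := H\join\aff Q$ is a split hyperplane of $P\join Q$: by construction $\tilde H$ cuts no edge inside $P$ and no edge inside $Q$, and an edge from $v\in\Vertices P$ to $w\in\Vertices Q$ cannot be cut because $w$ already lies on $\tilde H$. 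The induced split has maximal cells $P_1\join Q$ and $P_2\join Q$, where $P_1,P_2$ are the maximal cells of $S$; a symmetric argument produces splits $P\join S'$ from splits $S'$ of $Q$.

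Combining these, suppose $P$ and $Q$ are totally splittable and let $\Delta=\Delta_P\join\Delta_Q$ be any triangulation of $P\join Q$. By hypothesis $\Delta_P$ is a common refinement of splits $S_1,\dots,S_k$ of $P$ and $\Delta_Q$ is a common refinement of splits $S'_1,\dots,S'_l$ of $Q$. Since each maximal simplex of $\Delta$ is the join of a maximal simplex of $\Delta_P$ with one of $\Delta_Q$, it lies entirely in one of the two maximal cells of every $S_j\join Q$ and of every $P\join S'_j$. Thus $\Delta$ is a common refinement of these splits, which proves that $P\join Q$ is totally splittable. I expect the main technical obstacle to be the first structural observation: to make it rigorous that the $\Vertices P$-parts of the maximal simplices of $\Delta$ really glue into a bona fide triangulation of $P$ (with matching faces and the correct union), one should invoke the canonical ``linear interpolation'' projections $P\join Q\to P$ and $P\join Q\to Q$ coming from the skew-position convention, which allow $\Delta_P$ and $\Delta_Q$ to be recovered as images of $\Delta$.
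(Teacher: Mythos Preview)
Your proposal is correct and follows essentially the same approach as the paper's proof: both use Proposition~\ref{prop:vertex-splits} for the forward direction, and for the converse both rely on the decomposition $\Delta=\Delta_P\join\Delta_Q$ together with the observation that $H\join\aff Q$ and $\aff P\join H'$ are split hyperplanes of the join. The only difference is that where you sketch the argument for the triangulation decomposition and flag it as the main technical step, the paper simply cites \cite[Theorem~4.2.7]{Triangulations} for this fact.
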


\begin{proof}
  Suppose that $P\join Q$ is totally splittable.  Then $P$ and $Q$ both occur as faces of
  $P\join Q$, and the claim follows from Proposition \ref{prop:vertex-splits}.

  Let $\dim P=d$ and $\dim Q=e$, and assume that $P$ and $Q$ both are totally splittable.
  The join of a $d$-simplex and an $e$-simplex is a $(d+e+1)$-simplex, and hence the join
  cell-by-cell of a triangulation of $P$ and a triangulation of $Q$ yields a triangulation
  of $P\join Q$.  Conversely, each triangulation of $P\join Q$ arises in this way
  \cite[Theorem~4.2.7]{Triangulations}.  The join of a split hyperplane of $P$ with $\aff Q$
  and the join of a split hyperplane of $Q$ with $\aff P$ yields split hyperplanes of $P\join
  Q$.  Now consider any triangulation $\Delta$ of $P\join Q$.  Then there are
  triangulations $\Delta_P$ and $\Delta_Q$ of $P$ and $Q$, respectively, such that
  $\Delta=\Delta_P\join\Delta_Q$.  By assumption, there is a set $S_P$ of splits of $P$
  inducing $\Delta_P$.  Likewise $S_Q$ is the set of splits inducing $\Delta_Q$.  Then the
  set of joins of all splits from $S_P$ with $\aff Q$ (as an affine subspace of
  $\RR^{d+e+1}$) and the set of joins of all splits from $S_Q$ with $\aff P$ jointly
  induce the triangulation~$\Delta$.
\end{proof}

Lemma \ref{lem:join} together with Example \ref{ex:totally-splittable} completes the proof
that all the polytopes listed in Theorem \ref{thm:totally-splittable-classification} are,
in fact, totally splittable.  The remainder of this section is devoted to proving that
there are no others.

\begin{proposition}\label{prop:cross-polytope}
  Let $P\subset\RR^d$ be a proper totally splittable $d$-polytope. Then $P$ is a regular
  crosspolytope if and only if the intersection $\bigcap_{v\in\Vertices P} v^\perp$ is not
  empty.
\end{proposition}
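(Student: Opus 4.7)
The forward direction is immediate: in the standard $d$-crosspolytope $\conv\{\pm e_i:i\in[d]\}$, each vertex $\pm e_i$ has neighbors $\{\pm e_j:j\neq i\}$, spanning the hyperplane $\{x_i=0\}$ which contains the origin. Hence $0\in\bigcap_v v^\perp$.

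For the converse, suppose $p\in\bigcap_v v^\perp$ and translate so that $p=0$, making every $v^\perp$ a linear hyperplane through the origin. My plan is to combine Lemma~\ref{lem:compatible-vertex-splits} with this condition to force an antipodal pairing on $V$. For any two non-adjacent vertices $v,v'$, that lemma gives that the vertex splits at $v$ and $v'$ are compatible, so $v^\perp\cap v'^\perp\cap\interior P=\emptyset$; since $0\in v^\perp\cap v'^\perp$ by hypothesis, once I establish $0\in\interior P$ the only possibility is $v^\perp=v'^\perp$. Unwinding the two vertex splits against this common hyperplane places $v$ and $v'$ on opposite sides with no other vertex off it, so $v'$ is the unique non-neighbor of $v$ and $V$ partitions into non-neighbor pairs.

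By Corollary~\ref{cor:perp_intersection} applied to both members of such a pair, both $v$ and $v'$, together with the origin, lie in the common linear subspace $L_v:=\bigcap_{w\text{ neighbor of }v}w^\perp$; once $L_v$ is shown to collapse to $\lin\{v\}$, each pair lies on a line through the origin. Since no line in $\RR^d$ can carry three extreme points of a convex polytope, distinct pairs occupy distinct lines through $0$. Induction on $d$, applied to the vertex figure at $v$ (a totally splittable $(d-1)$-polytope that inherits the $\bigcap$-condition from $P$), shows the vertex figure is a $(d-1)$-crosspolytope, which forces the number $k$ of pairs to equal $d$. Thus $P$ has the oriented matroid of a regular $d$-crosspolytope.

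The main obstacle I anticipate is proving $0\in\interior P$. I would argue by contradiction: if $0\in\partial P$ or $0\notin P$, take a supporting hyperplane $H$ of $P$ through (or strictly separating) $0$; using that $P$ is proper (so every vertex has a non-neighbor) together with $0\in v^\perp$ for every $v$, derive a configuration of vertices relative to $H$ incompatible with the requirement that each split hyperplane $v^\perp$ genuinely meets $\interior P$. This, together with verifying the collapse $L_v=\lin\{v\}$ and the inheritance of the $\bigcap$-condition by vertex figures, forms the technical backbone of the argument.
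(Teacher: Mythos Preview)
Your approach is genuinely different from the paper's and carries real gaps that you yourself flag.  The paper argues the contrapositive: assume $P$ is \emph{not} a crosspolytope, pick a vertex $v$ with at least two vertices $u,w$ strictly on the far side of $v^\perp$ (this is where ``not a crosspolytope'' is used), observe that the presence of $u$ forces $v^\perp\ne w^\perp$, and then compatibility of the vertex splits at $v$ and $w$ (Lemma~\ref{lem:compatible-vertex-splits}) yields $v^\perp\cap w^\perp\cap\interior P=\emptyset$.  The paper then rules out the boundary by a short argument (a boundary point lying in every $x^\perp$ would force a \emph{vertex} $x$ to lie in every split hyperplane, since split hyperplanes do not cut edges, contradicting $x\notin x^\perp$), and the exterior by a similar consideration.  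This is essentially a three-line proof once the contrapositive is set up.

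Your direct route forces you to establish $0\in\interior P$ \emph{first}, before any structural information about $P$ is available, and your sketch for this (a separating hyperplane plus properness) is not a proof; in particular the exterior case does not obviously reduce to anything concrete.  Note that the boundary case you could handle exactly as the paper does, but the exterior case is where the contrapositive buys something: the paper already has in hand two \emph{distinct} compatible split hyperplanes $v^\perp\ne w^\perp$, coming from the failure of the crosspolytope property, and this is what drives the argument.  In your direction you have no such pair until after you know $0\in\interior P$.

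Your second block of machinery (the collapse $L_v=\lin\{v\}$ and induction through vertex figures to force $k=d$) is also more than the paper does, and it has its own loose ends: the inheritance of the $\bigcap$-condition by the vertex figure is not automatic (the vertex split hyperplanes of the vertex figure are not simply restrictions of the $w^\perp$ to a slice near $v$), and $\dim L_v=1$ requires knowing $k=d$, which is precisely what the induction is supposed to deliver.  The paper sidesteps all of this by working from ``not a crosspolytope'' toward emptiness of the intersection rather than the other way around.
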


\begin{proof}
  Clearly, the regular crosspolytope $P=\conv\smallSetOf{\pm e_i}{i\in[d]}$ has the
  property that the intersection of its split hyperplanes is the origin.  Conversely,
  suppose that $P$ is not a crosspolytope. We assumed that $P$ is proper, meaning that $P$
  is not a pyramid.  Hence there exists a vertex $v$ of $P$ such that at least two
  vertices $u,w$ are separated from $v$ by the hyperplane $v^\perp$.  By
  Proposition~\ref{prop:vertex-splits}, the split hyperplane $v^\perp$ passes through the
  neighbors of $v$ in the vertex-edge graph of $P$.  Since $u$ is on the same side of
  $v^\perp$ as $w$ it follows that $v^\perp\ne w^\perp$ and, moreover, $v^\perp\cap
  w^\perp\cap\interior P=\emptyset$.  Now suppose that the intersection of all split
  hyperplanes contains points in the boundary of~$P$.  But since the split hyperplanes do
  not cut through edges, the intersection must contain at least one vertex $x\in\Vertices
  P$.  This is a contradiction since $x\not\in x^\perp$.  By a similar argument, we can
  exclude the final possibility that the intersection of all split hyperplanes contains
  any points outside $P$.  Therefore this intersection is empty, as we wanted to show.
\end{proof}

In a way crosspolytopes (which are not quadrangles) are maximally totally splittable.

\begin{lemma}\label{lem:cross_ext}
  Let $P\subset\RR^d$ be a $d$-dimensional regular crosspolytope and
  $v\in\RR^d\setminus P$ be a point almost beyond the facet $F$ of $P$.  If $d\ge 3$
  then $\conv(P\cup\{v\})$ is not totally splittable.
\end{lemma}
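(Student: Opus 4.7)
The plan is to suppose for contradiction that $P':=\conv(P\cup\{v\})$ is totally splittable and derive that $v$ must be the origin, which lies in $P$. Choose coordinates so that $P=\conv\smallSetOf{\pm e_i}{i\in[d]}$; after a relabeling of signs we may assume $F=\conv\smallSetOf{\epsilon_j e_j}{j\in[d]}$ for some $\epsilon\in\{\pm1\}^d$, so that the defining inequality of $F$ reads $\sum_j\epsilon_j x_j\le 1$ and $v$ satisfies $\sum_j\epsilon_j v_j>1$; in particular $v\ne 0$.

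The first step is to pin down the vertex-edge graph of $P'$. Because $v$ is almost beyond $F$, the vertex set of $P'$ is $\Vertices P\cup\{v\}$, the only facet of $P$ destroyed is $F$ itself, and the neighbors of $v$ in $P'$ are precisely the $d$ vertices of $F$. Every edge of the crosspolytope $P$ lies in $2^{d-2}$ facets (those choosing the signs of its two endpoints and arbitrary signs elsewhere). Since $d\ge 3$ we have $2^{d-2}\ge 2$, so each such edge is contained in at least one facet distinct from $F$, which survives in $P'$; hence every edge of $P$ remains an edge of $P'$. Combined with the description of the neighbors of $v$, this yields, for each $w=\epsilon_j e_j\in F$,
\[
N(w)\ =\ \{v\}\cup\smallSetOf{\pm e_i}{i\in[d]\setminus\{j\}}.
\]

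The second step applies Proposition~\ref{prop:vertex-splits} to each such $w$: the neighbors $N(w)$ must lie in an affine hyperplane (the one spanning a facet of $\conv(\Vertices P'\setminus\{w\})$). The $2(d-1)\ge 4$ points $\pm e_i$, $i\ne j$, already affinely span the coordinate hyperplane $\{x_j=0\}$, which is thus the unique hyperplane containing them. Hence $v\in\{x_j=0\}$, i.e.\ $v_j=0$. Letting $j$ range over $[d]$ forces $v=0$, contradicting $\sum_j\epsilon_j v_j>1$.

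The main obstacle, and the point where the hypothesis $d\ge 3$ enters decisively, is ensuring that the edges of $P$ within $F$ persist in $P'$; otherwise the set $N(w)$ would shrink and no longer force $v$ into $\{x_j=0\}$. The inequality $2^{d-2}\ge 2$ for $d\ge 3$ is exactly the ingredient that guarantees this. For $d=2$ the unique facet containing the edge of $F$ is $F$ itself, so that edge is lost upon attaching $v$, and the same argument (correctly) collapses, in agreement with polygons being totally splittable.
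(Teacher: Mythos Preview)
Your argument is correct and essentially the same as the paper's: you show that for each vertex $w$ of $F$ the old crosspolytope neighbors $\{\pm e_i : i\ne j\}$ survive as neighbors in $P'$, forcing $w^\perp=\{x_j=0\}$, and then conclude $v\in\bigcap_j\{x_j=0\}=\{0\}$. The paper phrases the last step via Corollary~\ref{cor:perp_intersection} and is terser about why the old edges persist, whereas you supply the explicit count $2^{d-2}\ge 2$; the logical content is identical.
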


\begin{proof}
  Without loss of generality $P=\conv\{\pm e_1,\pm e_2,\dots,\pm e_d\}$.  Suppose that
  $\conv(P\cup\{v\})$ is totally splittable.  Since we assumed $d \ge 3$ each vertex $w$
  of $P$ has at least $d+1$ neighbors. At least $d$ affinely independent vertices among
  these are still neighbors of $w$ in $\conv(P\cup\{v\})$, so the hyperplane $w^\perp$
  with respect to $P$ is the same as $w^\perp$ with respect to $\conv(P\cup\{v\})$.  We
  have that $F^\perp:=\bigcap_{w\in\Vertices{F}}w^\perp=\{0\}$, which implies $v\not\in
  F^\perp$, a contradiction to Corollary~\ref{cor:perp_intersection}.
\end{proof}

\begin{figure}[hbt]\label{fig:prism-plus-one}
  \includegraphics[width=.37\textwidth]{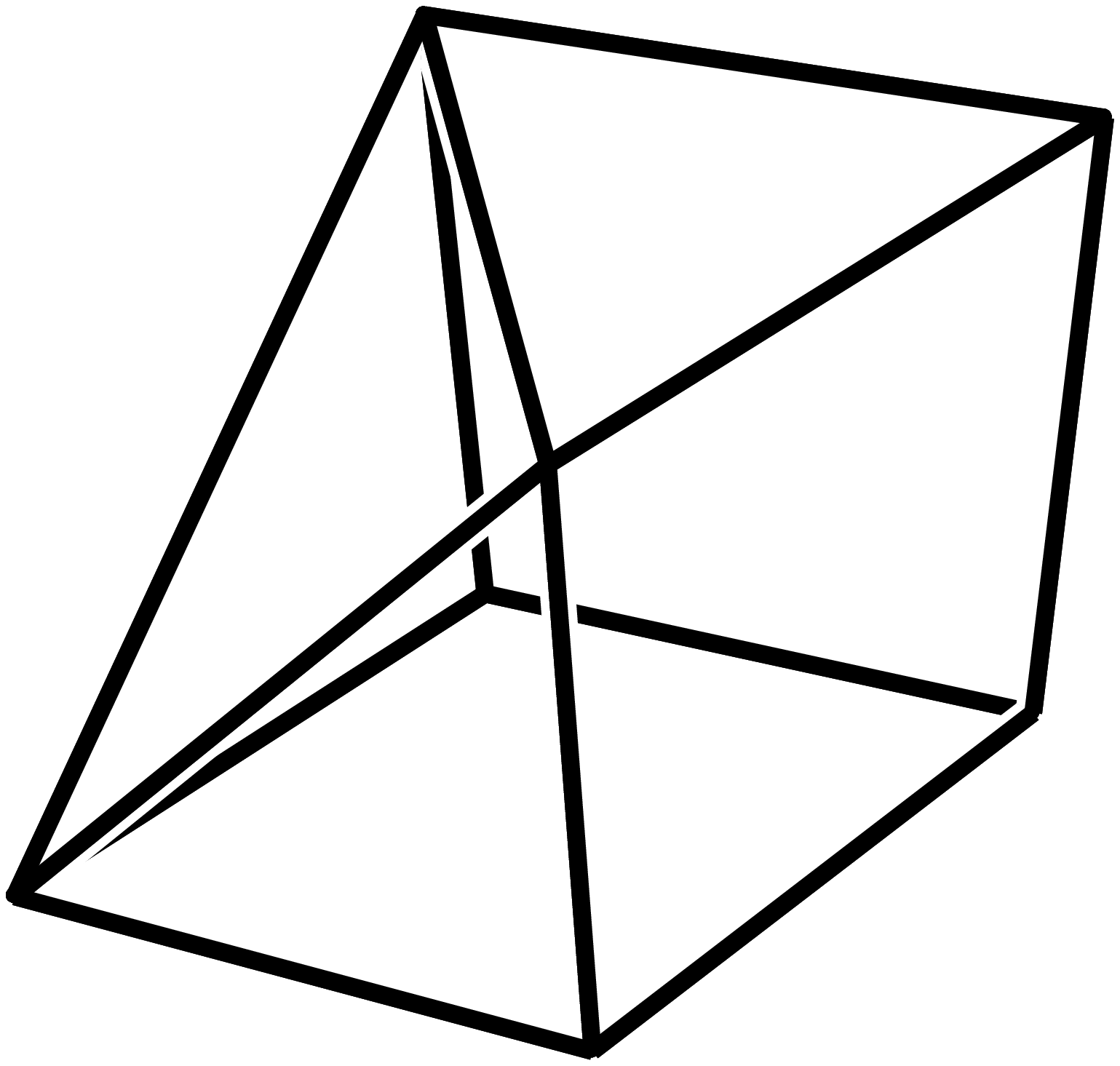}
  \quad
  \includegraphics[width=.37\textwidth]{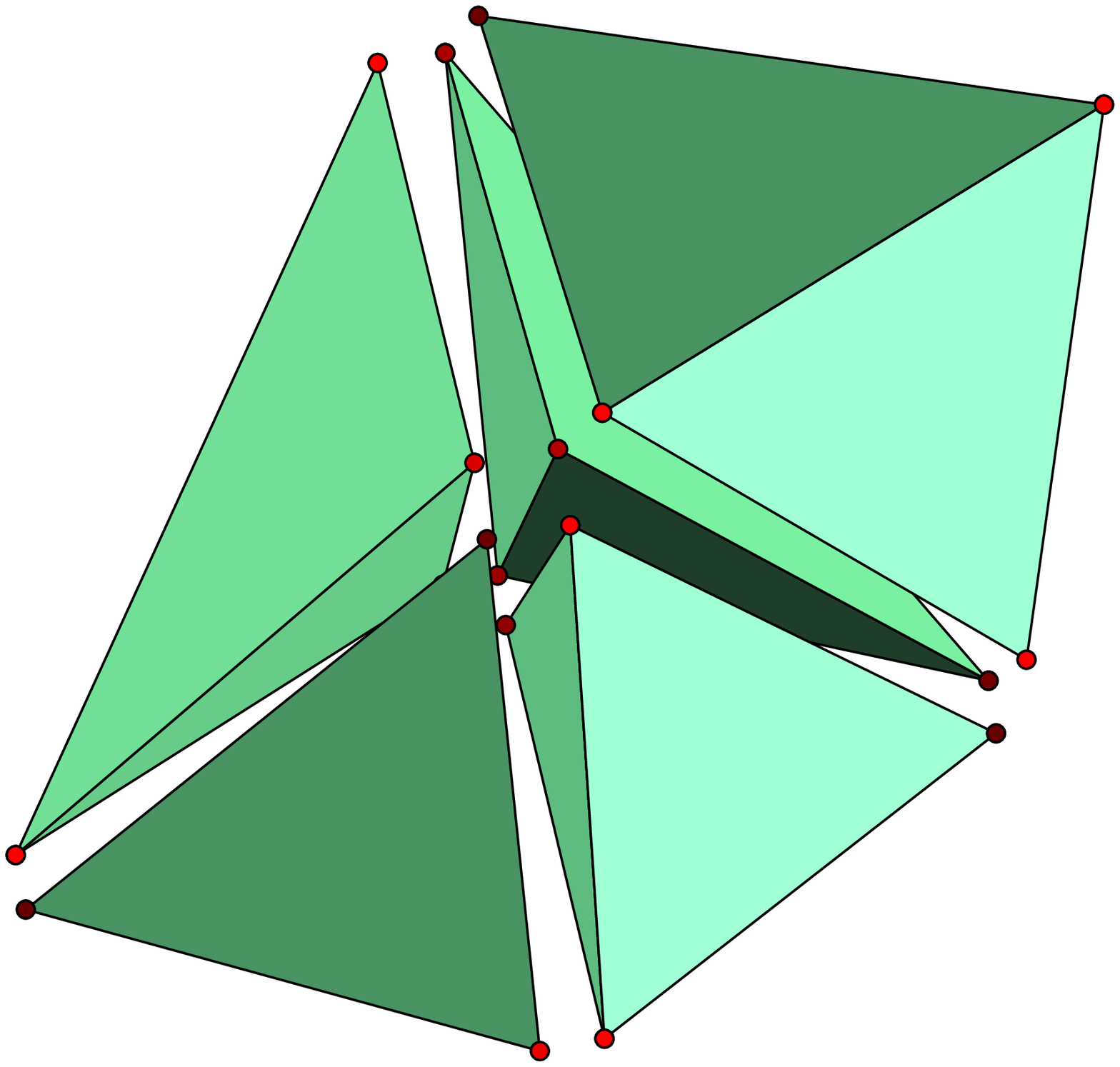}
  \caption{Convex hull of prism plus one point almost beyond a quadrangular facet,
    vertex-edge graph (left) and a non-split triangulation (right).}
\end{figure}

The same conclusion as in Lemma~\ref{lem:cross_ext} holds for prisms over simplices as
well.  See also Figure~\ref{fig:prism-plus-one} and Example~\ref{ex:prism-plus-one} below.

\begin{lemma}\label{lem:prism_ext}
  Let $P\subset\RR^d$ be a prism over a $(d-1)$-simplex and $v\in\RR^d\setminus P$ a point
  which is almost beyond a facet $F$ of $P$.  If $d\ge 3$ then $\conv(P\cup\{v\})$ is not
  totally splittable.
\end{lemma}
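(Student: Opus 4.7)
The plan is to suppose $P':=\conv(P\cup\{v\})$ is totally splittable and derive a contradiction by combining the vertex-split bookkeeping of Proposition~\ref{prop:vertex-splits} with an explicit non-split triangulation in the high-dimensional exceptional case. Place $P=\Delta^{d-1}\times[0,1]\subset\RR^d$ with $a_i=(e_i,1)$ and $b_i=(e_i,0)$ for $i<d$ and $a_d=(0,1)$, $b_d=(0,0)$, where $e_1,\dots,e_{d-1}$ are the standard basis vectors of $\RR^{d-1}$. Then $P$ is simple, every vertex $w$ has exactly $d$ neighbors, and a short computation yields the hyperplanes through them:
\[
a_i^\perp:\ x_i+x_d=1,\quad b_i^\perp:\ x_i=x_d\quad(i<d),
\]
together with $a_d^\perp:\sum_{k<d}x_k=x_d$ and $b_d^\perp:\sum_{k<d}x_k+x_d=1$.

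Since each $w\in V_F$ is a neighbor of $v$ in $P'$, the neighbors of $w$ in $P'$ consist of its $d$ original neighbors together with $v$. Proposition~\ref{prop:vertex-splits} demands these $d+1$ points be coplanar, forcing $v\in w^\perp$ for all $w\in V_F$; equivalently $v\in\bigcap_{w\in V_F}w^\perp$. (The almost-beyond but not-beyond case only creates further adjacencies and hence additional constraints, so the focus is on the beyond case.) If $F$ is a simplex facet, say $V_F=\{a_1,\dots,a_d\}$, the intersection is the single interior point $(1/d,\dots,1/d,(d-1)/d)$ of $P$, contradicting $v\notin P$. If $F$ is a box facet, say $F=F_d$ with $V_F=\{a_i,b_i:i\in[d-1]\}$, combining $x_i+x_d=1$ with $x_i=x_d$ collapses the intersection to the single point $p:=(1/2,\dots,1/2)$. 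For $d=3$ this point satisfies $x_1+x_2=1$ and hence lies on $F_3\subset P$, again a contradiction.

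The remaining case, $F$ a box facet with $d\ge4$, is the main obstacle: the forced point $p$ is genuinely outside $P$ (strictly beyond $F_d$), so the vertex-split bookkeeping alone is insufficient. To close this case I exhibit an explicit non-split triangulation. Let $\Sigma=\{\sigma_1,\dots,\sigma_d\}$ be the staircase triangulation of $P$ with $\sigma_k=\conv(a_1,\dots,a_k,b_k,\dots,b_d)$, and extend it to a triangulation $\Sigma'$ of $P'$ by placing $v$ after all vertices of $P$. The interior ridge $\sigma_2\cap\sigma_3=\conv(a_1,a_2,b_3,\dots,b_d)$ lies in the hyperplane $H:x_1+x_2=x_d$, and its relative interior is contained in $\interior P\subseteq\interior P'$. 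Evaluating the functional $x_1+x_2-x_d$ gives $1/2$ at $v=p$ and $-1$ at $a_3$, which belongs to $V_F$ because $d\ge4$; since $a_3$ is a neighbor of $v$ in $P'$, the edge $va_3$ is strictly cut by $H$. Hence $H$ is not a split hyperplane of $P'$, so $\Sigma'$ is a triangulation of $P'$ not induced by splits, contradicting total splittability.
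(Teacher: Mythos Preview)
Your ``beyond'' case follows the paper's approach (force $v\in\bigcap_{w\in V_F}w^\perp$ via Corollary~\ref{cor:perp_intersection} and locate the intersection), and your staircase-plus-placing argument for the box facet with $d\ge4$ is correct and in fact sharper than the paper's bare assertion that the intersection point is interior---in your coordinates $p=(1/2,\dots,1/2)$ lies strictly beyond $F_d$, so a further argument really is needed. The placing extension $\Sigma'$ is regular, the ridge $\sigma_2\cap\sigma_3$ has its relative interior in $\interior P\subset\interior P'$, and $H$ separates $v$ from its neighbor $a_3$; so this part is sound.

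The genuine gap is your parenthetical dismissal of the almost-beyond-but-not-beyond case. The claim that this situation ``only creates further adjacencies and hence additional constraints'' is false: when $v\in\aff F'$ for another facet $F'$, prism edges can be \emph{destroyed} in $P'$. Concretely, take $d=4$, $F=F_4$, and $v=(1,1,0,0)$, which lies on both $\{x_4=0\}$ and $\{x_3=0\}$; then the prism edge $b_1b_2$ becomes a diagonal of the square $2$-face $\conv\{b_4,b_1,v,b_2\}$ of $P'$ and ceases to be an edge. Consequently the neighbors of $b_1$ in $P'$ are $\{a_1,a_3,b_3,b_4,v\}$, which one checks are affinely independent; so $b_1^\perp$ in $P'$ is not even well-defined, and the constraint $v\in\bigcap_{w\in V_F}w^\perp$ (with $w^\perp$ computed in $P$) is unjustified. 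The paper handles this case by passing to the facet $\conv(F'\cup\{v\})$ of $P'$, which must be totally splittable by Proposition~\ref{prop:vertex-splits}; since $F'$ is either a $(d-1)$-simplex (whence $\conv(F'\cup\{v\})$ has a non-split triangulation for $d\ge4$ by a direct circuit argument) or a prism over a $(d-2)$-simplex (whence induction on $d$ applies), this yields the contradiction. You need this inductive reduction, or an equivalent device, to close the almost-beyond case.
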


\begin{proof}
  Suppose that $\conv(P\cup\{v\})$ is totally splittable.  As in the proof of
  Lemma~\ref{lem:cross_ext} we are aiming at a contradiction to
  Corollary~\ref{cor:perp_intersection}.  First suppose that $v$ is beyond $F$, and hence
  for $w\in\Vertices{P}$ the hyperplanes $w^\perp$ with respect to $P$ and
  $\conv(P\cup\{v\})$ coincide, since $d\ge 3$; see Remark~\ref{rem:beyond}.  Up to an
  affine transformation we can assume that
  $P=\conv\{e_1,e_2,\dots,e_d,f_1,f_2,\dots,f_d\}$ with
  \[
  f_k \ = \ -\sum_{i\ne k} e_i \, .
  \]
  The neighbors of the vertex $e_k$ are $e_1,e_2,\dots,e_{k-1},e_{k+1},\dots,e_d$ and
  $f_k$; symmetrically for the $f_k$. A direct computation shows that
  \[
  e_k^\perp\ =\ \SetOf{x}{x_k=0} \quad \text{and} \quad f_k^\perp \ = \SetOf{x}{2\sum_{i\ne k} x_i=(d-2)(x_k-1)}\,.
  \]
  
  We have to distinguish two cases: the facet $F$ of $P$ violated by $v$ may be a
  $(d-1)$-simplex or a prism over a $(d-2)$-simplex.  If $F$ is a simplex, for instance,
  $\conv\{e_1,e_2,\dots,e_d\}$, then we can conclude that the set $\bigcap_{w\in F}
  w^\perp = \{0\}$ which is in the interior of $P$ and hence cannot be equal to $v$.  If,
  however, $F$ is a prism, for instance, with the vertices
  $e_1,e_2,\dots,e_{d-1},f_1,f_2,\dots,f_{d-1}$, we can compute that
  \[
  \bigcap_{w\in\Vertices{F}} w^\perp \ = \ \left\{\frac{2-d}2 e_d \right\} \,,
  \]
  again an interior point.
  In both cases we arrive at the desired contradiction to
  Corollary~\ref{cor:perp_intersection}.
  
  Now suppose that $v$ violates $F$ but it is not beyond $F$, that is, $v$ is contained in
  the affine hull of some facet $F'$ of $P$.  Let us assume that $d\geq 4$ and that the
  assertion is true for $d=3$. Then the polytope $\conv(F'\cup\{v\})$ is totally
  splittable by Proposition~\ref{prop:vertex-splits}. Again, $F'$ may be a $(d-1)$-simplex
  or a prism over a $(d-2)$-simplex. If $F'$ is a $(d-1)$-simplex, it can easily be seen
  that $\conv(F'\cup\{v\})$ is not totally splittable for $d>3$ since $F'$ does not have
  any splits.  If $F'$ is a prism over a simplex, we are done by induction.
  
  An easy consideration of the cases, which we omit, allows us to prove the result in the base
  case $d=3$. See Example~\ref{ex:prism-plus-one} and Figure~\ref{fig:prism-plus-one} for
  one of the cases arising.
\end{proof}

\begin{example}\label{ex:prism-plus-one}
  Consider the $3$-polytope $P=\conv\{e_1,e_2,e_3,-e_2-e_3,-e_1-e_3,-e_1-e_2\}$, which is
  a prism over a triangle.  For instance, the point $v=e_1+e_2-e_3$ lies almost beyond the
  quadrangular facet $F=\conv\{e_1,e_2,-e_2-e_3,-e_1-e_3\}$.  The polytope
  $\conv(P\cup\{v\})$ admits a triangulation which is not split; see
  Figure~\ref{fig:prism-plus-one}.
\end{example}

\begin{proposition}\label{prop:join-cycle-disjoint}
  Let $P$ be a proper totally splittable polytope that is not a regular crosspolytope.
  Then $P$ is a join if and only if the vertex set of $P$ admits a partition $\Vertices
  P=U\cup W$ such that no vertex split of a vertex in $U$ is compatible with any vertex
  split of a vertex in~$W$.
\end{proposition}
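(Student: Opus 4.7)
The proof splits into a short forward direction and a more substantial backward direction.

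\smallskip
\emph{Forward direction.} Suppose $P = P_1 \join P_2$. Put $U := \Vertices P_1$ and $W := \Vertices P_2$. By the very definition of the polytopal join, every $u \in U$ is joined by an edge of $P$ to every $w \in W$. Since $P$ is totally splittable, Proposition~\ref{prop:vertex-splits} provides a vertex split at each vertex; Lemma~\ref{lem:compatible-vertex-splits} then yields immediately that the vertex splits at $u$ and $w$ are incompatible.

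\smallskip
\emph{Backward direction.} Assume the partition $\Vertices P = U \cup W$ satisfies the hypothesis. Reading Lemma~\ref{lem:compatible-vertex-splits} in the contrapositive, every $u \in U$ must share an edge of $P$ with every $w \in W$. So the neighbor set of $u$ contains all of $W$, and by Proposition~\ref{prop:vertex-splits} the affine hyperplane $u^\perp$ (spanned by the neighbors of $u$) satisfies $\aff W \subseteq u^\perp$. Symmetrically, $\aff U \subseteq w^\perp$ for every $w \in W$. Taking intersections over all of $U$ and $W$,
\[
\aff U \cap \aff W \ \subseteq \ \bigcap_{v \in \Vertices P} v^\perp \,,
\]
and the right-hand side is empty by Proposition~\ref{prop:cross-polytope}, since $P$ is proper and not a regular crosspolytope. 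Hence $\aff U$ and $\aff W$ are disjoint.

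\smallskip
\emph{Main obstacle and conclusion.} It remains to upgrade disjointness to skew affine position, that is, $\dim \aff U + \dim \aff W + 1 = \dim P$, which is precisely what is required for $P$ to be the join $\conv U \join \conv W$. This is the principal technical hurdle, because disjoint affine spans alone are not sufficient (two opposite facets of a prism have disjoint but parallel affine spans). The plan is to assume that skew fails and thereby obtain a nonzero direction $\vec z$ lying in the intersection of the linear parts of $\aff U$ and $\aff W$; equivalently, a sum-zero affine dependence among $\Vertices P$ that mixes $U$- and $W$-vertices. Decomposing into circuits of $P$, some circuit $C$ then has support intersecting both $U$ and $W$. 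Applying the affine functional associated to a vertex-split hyperplane $u^\perp$ (for some $u$ in the support of $C$ that lies in $U$) to the circuit equation takes advantage of the fact that all of $W$ lies on $u^\perp$: the $W$-contributions vanish, and the $u$-term, which is the unique positive-side contribution, must be cancelled entirely from within $U$. A Radon-type analysis---using the established disjointness $\aff U \cap \aff W = \emptyset$ to eliminate ``separated'' circuits where the positive and negative parts split cleanly between $U$ and $W$, and using the hyperplane functionals $u^\perp$ and $w^\perp$ to exclude the remaining mixed configurations---then forces every circuit of $P$ to be supported entirely inside $U$ or entirely inside $W$. This rules out any common direction in $L_U \cap L_W$, so skew holds, and $P = \conv U \join \conv W$ is a join.
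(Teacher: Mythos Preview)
Your forward direction and the deduction that $\aff U \cap \aff W = \emptyset$ mirror the paper's proof exactly. At that point the paper simply writes ``The affine subspaces $\aff U$ and $\aff W$ are skew.  It follows that $P=(\conv U)\join(\conv W)$,'' without any further argument; it does not carry out the extra step you single out.

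Your identification of the disjoint-versus-skew issue is legitimate, but the paragraph labeled \emph{Main obstacle and conclusion} is a plan rather than a proof. The affine-functional idea---applying a functional $\ell_u$ with zero set $u^\perp$ to a putative mixed circuit so that all $W$-contributions vanish---is a sound first move, and it does show that any $u\in U$ in the support must be accompanied in the same sign class by some non-neighbor $u'\in U$. But you never push this to a contradiction: the sentence ``a Radon-type analysis \ldots\ then forces every circuit of $P$ to be supported entirely inside $U$ or entirely inside $W$'' asserts precisely what remains to be shown. Concretely, the case of a minimal affine dependence whose $U$-part and $W$-part each have coefficient sum zero (so that disjointness of $\aff U$ and $\aff W$ gives no immediate obstruction) is left open. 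Either complete this circuit argument in full, or---as the paper implicitly does---justify why disjointness already suffices for the oriented-matroid conclusion under the standing hypotheses.
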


\begin{proof}
  Let $P=(\conv U) * (\conv W)$ be a proper join.  In particular, $P$ is not a pyramid,
  and $\conv U$ and $\conv W$ both are at least one-dimensional.  Then, by the definition
  of join, each vertex in $U$ shares an edge with each vertex in $W$, and thus the
  corresponding vertex splits are not compatible.

  Conversely, assume that no split with respect to a vertex in $U$ is compatible with a
  split with respect to any vertex in $W$.  By Lemma~\ref{lem:compatible-vertex-splits}
  each vertex in $U$ is joined by an edge to each vertex in $W$.
  Proposition~\ref{prop:vertex-splits} says that each vertex split hyperplane $u^\perp$
  contains all neighbors of $u$.  Thus we infer that $\bigcap_{u\in U} u^\perp\supset
  \conv W$ and, symmetrically, $\bigcap_{w\in W} w^\perp\supset \conv U$.  Now there are
  two cases to distinguish.  If $\bigcap_{v\in\Vertices P} v^\perp$ is non-empty then $P$
  is a regular crosspolytope due to Proposition~\ref{prop:cross-polytope} contradicting
  the assumption.  The remaining possibility is that $\bigcap_{v\in\Vertices P} v^\perp$
  is empty.  In this case we have
  \[
  \aff U\cap \aff W \ \subseteq \ \bigcap_{w\in W} w^\perp \cap \bigcap_{u\in U} u^\perp \
  = \ \bigcap_{v\in\Vertices P} v^\perp \ = \ \emptyset \, .
  \]
  The affine subspaces $\aff U$ and $\aff W$ are skew.  It follows that $P=(\conv
  U)*(\conv W)$.
\end{proof}

For the following we will switch from the primal view on our polytope $P$ to its spherical Gale dual
$G$.  A point of multiplicity two in $G$ is called a \emph{double point}.  Vertices of $P$
corresponding to the same point in $G$ are called \emph{siblings}.

\begin{lemma}\label{lem:join-double-points}
  Let $P$ be a totally splittable polytope which is not a join, and let $G$ be a spherical
  Gale diagram of $P$.  Then $P$ is proper, and each point of $G$ is a single point, or
  each point is a double point.  In particular, there are no points in $G$ with
  multiplicity greater than two.
\end{lemma}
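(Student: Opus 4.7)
My plan is to show that any sibling pair in the Gale diagram forces $P$ to be a regular crosspolytope (via a partition argument feeding Proposition~\ref{prop:join-cycle-disjoint}), and then to read off multiplicities from the standard Gale diagrams of crosspolytopes. Properness comes for free: a pyramid is the join of its apex with its base, so the non-join hypothesis rules out pyramids and makes $G$ a spherical configuration with no zero vectors.

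The central implication is: \emph{if distinct vertices $v_1, v_2$ correspond to the same point of $G$, then $P$ is a regular crosspolytope.} Since $v_1^\star$ and $v_2^\star$ are positive scalar multiples in $\RR^{n-d-1}$, a relation $r_2 v_1^\star - r_1 v_2^\star = 0$ is a circuit of $V^\star$ on the minimal support $\{v_1,v_2\}$ (any shorter support is killed by properness). Dualising, this is a cocircuit of $V_P$ whose underlying hyperplane $H$ contains $V\setminus\{v_1,v_2\}$ and separates $v_1$ from $v_2$, so $P$ is a bipyramid over $Q := \conv(V\setminus\{v_1,v_2\})$. In such a bipyramid each apex is adjacent to every base vertex and to no other apex, hence the neighbours of $v_1$ and of $v_2$ both equal $V\setminus\{v_1,v_2\}$, and $v_1^\perp = v_2^\perp = H$; the vertex splits at $v_1$ and at $v_2$ therefore coincide. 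I would then apply Proposition~\ref{prop:join-cycle-disjoint} to the partition $U = \{v_1,v_2\}$, $W = V\setminus\{v_1,v_2\}$: every $w \in W$ is a neighbour of $v_1$, so by Lemma~\ref{lem:compatible-vertex-splits} its vertex split is incompatible with the common vertex split of $v_1$ and $v_2$. The proposition then forces $P$ to be a regular crosspolytope, as by hypothesis $P$ is not a join.

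Once this implication is in hand, the conclusion is immediate from the Gale diagram of a $d$-crosspolytope with $d\ge 2$: its $2d$ vertices fall into $d$ antipodal pairs, each producing exactly one double point of $G$, and no vertex has more than one sibling. Hence if $G$ contains any sibling pair then $P$ is a crosspolytope and every point of $G$ is a double point; otherwise every point is single, and multiplicities above two never occur. The step I expect to require the most care is confirming the bipyramid consequences inside a totally splittable polytope — in particular that $v_1$ and $v_2$ are genuinely non-adjacent and that the neighbours of $v_1$ exhaust $V\setminus\{v_1,v_2\}$. A clean way to do this is to apply the uniqueness clause of Lemma~\ref{lem:unique-cycle} to the Gale ray $\pos v_1^\star = \pos v_2^\star$: any extra neighbour of $v_1$ outside $\aff(V\setminus\{v_1,v_2\})$ would furnish a second circuit on that ray in addition to the size-two sibling circuit, contradicting the uniqueness of the split circuit and hence violating Proposition~\ref{prop:vertex-splits}.
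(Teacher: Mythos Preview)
Your argument is correct and takes a genuinely different route from the paper's proof.

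\textbf{Comparison.} The paper handles multiplicity $\ge 3$ first (via Lemma~\ref{lem:unique-cycle}), and then, assuming some points are double and some are single, invokes the Separation Theorem to find a linear hyperplane $H$ in Gale space weakly separating the double points from the positive hull of the single points; the partition for Proposition~\ref{prop:join-cycle-disjoint} is then $U=\{w:w^\star\in H\}$ versus its complement. Your approach bypasses the separation step entirely: a single sibling pair $v_1^\star=v_2^\star$ already yields the two-element set $U=\{v_1,v_2\}$ as one side of the partition, because the bipyramid structure makes every other vertex adjacent to both apices, so Lemma~\ref{lem:compatible-vertex-splits} gives the required incompatibilities directly. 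Feeding this into Proposition~\ref{prop:join-cycle-disjoint} forces $P$ to be a crosspolytope, and the explicit Gale diagram of a crosspolytope then settles all multiplicities at once. This is more elementary (no Separation Theorem) and in fact proves the stronger intermediate statement ``any sibling pair forces a crosspolytope,'' which also absorbs part of what the paper later proves in Lemma~\ref{lem:Gale-cross}.

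\textbf{One point to tighten.} Your sentence ``so $P$ is a bipyramid over $Q$'' is not automatic from the cocircuit alone: a priori $[v_1,v_2]$ could be an edge of $P$, in which case $P\cap H$ strictly contains $Q$ and the bipyramid description fails. You do address this in your final paragraph, and the fix is exactly the one you indicate, but it should be stated cleanly: by Proposition~\ref{prop:vertex-splits} the vertex $v_1$ admits a vertex split, so $\pos v_1^\star$ is a split ray; by Lemma~\ref{lem:unique-cycle} the circuit on this ray is unique; both the sibling circuit $D$ (with $D_+=\{v_1^\star\}$, $D_-=\{v_2^\star\}$) and the vertex-split circuit $C[v_1]$ lie on this ray, hence $C[v_1]=D$. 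Therefore $v_1^\perp=H$, and since a split hyperplane cuts no edge while $v_1,v_2$ lie on opposite sides of $H$, the segment $[v_1,v_2]$ cannot be an edge. Now the bipyramid structure is genuine and your adjacency claims follow.
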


\begin{proof}
  If $P$ is a regular crosspolytope we know from the explicit description of $G$ in
  Example~\ref{ex:totally-splittable}~\eqref{ex:totally-splittable:cross} that the
  conclusion of the lemma holds. So we can assume that this is not the case. Since we
  assume that $P$ is not a join, in particular, it is not a pyramid, and this is why $P$
  is proper.  If $G$ had a point with multiplicity three or above, then each pair of
  copies of $x$ defines a circuit which yields a contradiction to
  Lemma~\ref{lem:unique-cycle}.

  So suppose now that $v_1$ is a vertex that has a sibling $v_2$ and that the set $W$ of
  all vertices without a sibling is non-empty.  Then, again by
  Lemma~\ref{lem:unique-cycle}, $v_1^\star=v_2^\star$ is not contained in $\pos W^\star$.
  By the Separation Theorem \cite[2.2.2]{Gruenbaum03}, there is an affine hyperplane in
  $\RR^{n-d-1}$ which weakly separates $v_1^\star=v_2^\star$ from $\pos W^\star$.  This
  argument even works for all vertices with a sibling simultaneously.  That is $H$ weakly
  separates the double points from non-double points.  By rotating $H$ slightly, if
  necessary, we can further assume that $H$ contains at least one dual vertex $w^\star$ of
  a vertex $w\in W$ without a sibling.  For each such $w\in W$ with $w^\star\in H$ the
  support of the circuit $C[w]$ is a subset of $W^\star$ and from
  Lemma~\ref{lem:unique-cycle} it follows that the support of $C[w]$ is contained in the
  hyperplane $H$. In the primal view, this means that all vertices $v$ of $P$ with
  $v^\star\not\in H$ have to be in the splitting hyperplane $w^\perp$ and that the vertex
  split of $w$ cannot be compatible to any vertex split of a vertex $v$ with
  $v^\star\not\in H$. If now we define $U:=\smallSetOf{w\in\Vertices P}{w^\star\in H}$ we
  have a partition of $\Vertices P$ in $U$ and $\Vertices P\setminus U$ such that no
  vertex split of a vertex in $U$ is compatible with any vertex split of a vertex in
  $\Vertices P\setminus U$. So $P$ is a join by
  Proposition~\ref{prop:join-cycle-disjoint}.
\end{proof}

\begin{figure}[htb]
  \includegraphics[scale=.75]{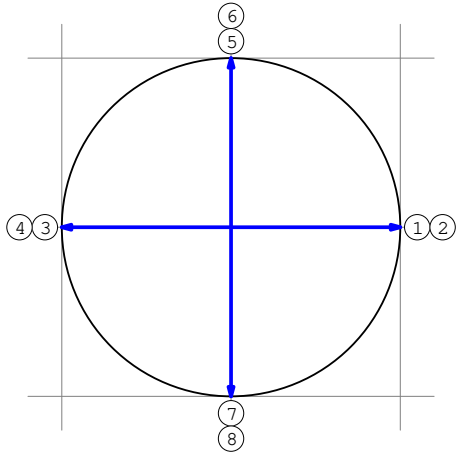}

  \caption{Gale diagram of the join of two squares, labeled $\{1,2,3,4\}$ and
    $\{5,6,7,8\}$, respectively.}
\end{figure}

A point $x\in G$ is \emph{antipodal} if $-x$ is also in $G$.  Notice that any quadrangle,
regular or not, has a zero-dimensional spherical Gale diagram with exactly two pairs of
antipodal points.

\begin{lemma}\label{lem:Gale-cross}
  Let $P$ be a totally splittable $d$-polytope with $d\ge 2$ which is not a join.  If each
  point in the spherical Gale diagram $G$ is a double point then $P$ is a regular
  crosspolytope.
\end{lemma}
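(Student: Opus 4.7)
The plan is to invoke Proposition~\ref{prop:cross-polytope}: I aim to show that $\bigcap_{v\in\Vertices P}v^\perp$ is non-empty, which is exactly the criterion identifying $P$ as a regular crosspolytope. By hypothesis together with Lemma~\ref{lem:join-double-points}, the $n$ vertices of $P$ split into $m=n/2$ sibling pairs $(v_i,v_i')$ sharing a common Gale vector $v_i^\star=(v_i')^\star$.

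First I would pin down the vertex-edge graph of $P$. Each sibling relation gives a length-$2$ circuit $\{v_i^\star,(v_i')^\star\}$ of $G$, whose signed cone intersection is the ray $\pos v_i^\star$. Proposition~\ref{prop:vertex-splits} guarantees that a vertex split at $v_i$ exists, and the uniqueness clause of Lemma~\ref{lem:unique-cycle} then forces this length-$2$ circuit to be $C[v_i]$. Consequently, the split hyperplane $H_i:=v_i^\perp=(v_i')^\perp$ passes through every vertex of $P$ other than $v_i$ and $v_i'$, which themselves lie on opposite sides of $H_i$. Thus $P$ has exactly $m$ splits (all of them vertex splits), and in the vertex-edge graph each $v_i$ is adjacent to every vertex except its sibling $v_i'$.

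Next I would use Corollary~\ref{cor:perp_intersection} to produce a common point of $\bigcap_{j=1}^m H_j=\bigcap_{v\in\Vertices P}v^\perp$. Since every neighbor $w$ of $v_i$ lies in some pair $j\neq i$ and contributes the same hyperplane $w^\perp=H_j$, the corollary yields $v_i\in\bigcap_{j\neq i}H_j$. By symmetry $v_i'\in\bigcap_{j\neq i}H_j$ as well, and convexity then places the whole segment $[v_i,v_i']$ inside this intersection. But $v_i$ and $v_i'$ are strictly separated by $H_i$, so the segment meets $H_i$ at a unique point $p_i\in\bigcap_{j=1}^m H_j$. Hence the intersection is non-empty and Proposition~\ref{prop:cross-polytope} concludes that $P$ is a regular crosspolytope.

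The main delicacy I anticipate lies in the first step: a priori one of the $v_i^\star$ might also lie in the positive span of two other Gale vectors, producing a second circuit with the same cone intersection $\pos v_i^\star$ and thereby violating the uniqueness clause of Lemma~\ref{lem:unique-cycle}. The way out is that Proposition~\ref{prop:vertex-splits} unconditionally produces a vertex split at $v_i$, which retroactively forces the required uniqueness. Once this identification is in place, the rest of the argument is just affine geometry inside the arrangement $\{H_1,\dots,H_m\}$.
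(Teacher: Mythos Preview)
Your argument is correct and takes a genuinely different, more direct route than the paper.  The paper proceeds by induction on the number of sibling pairs: it removes a pair $\{v,w\}$, observes that $Q=P\cap v^\perp$ is again totally splittable with only double points in its Gale diagram, concludes inductively that $Q$ is a regular crosspolytope, and then uses that the common intersection point of the split hyperplanes of $Q$ already lies in $v^\perp$, so $\bigcap_{v}v^\perp\ne\emptyset$.  Your approach avoids induction altogether by exhibiting the common point explicitly as $[v_i,v_i']\cap H_i$, which is cleaner and shorter; the paper's inductive reduction, on the other hand, makes the connection to the lower-dimensional crosspolytope $Q=P\cap v^\perp$ visible.

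One small wrinkle: the claim that ``each $v_i$ is adjacent to every vertex except its sibling $v_i'$'' does not follow from what you have written; knowing that every other vertex lies on $H_i=v_i^\perp$ does not by itself make them neighbors of $v_i$.  Fortunately you need neither this claim nor Corollary~\ref{cor:perp_intersection}.  Your first step already shows that $H_j$ contains every vertex except $v_j,v_j'$; in particular $v_i,v_i'\in H_j$ for all $j\ne i$, so $[v_i,v_i']\subseteq\bigcap_{j\ne i}H_j$ directly, and the rest of your argument goes through unchanged.
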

  
\begin{proof}
  Assume that each point in $G$ is a double point.  Let $v$ be any vertex of $P$ and
  $v^\perp$ the hyperplane corresponding to the vertex split of $v$.  Since $v^\star$ is a
  double point in $G$ there is exactly one vertex $w$ other than $v$ which is not
  contained in $v^\perp$.  The polytope $Q:=\conv(\Vertices P\setminus\{v,w\})=P\cap
  v^\perp$ is a face of the vertex figure of $v$ and hence totally splittable by
  Proposition~\ref{prop:vertex-splits}.  Clearly, a spherical Gale diagram of $Q$ again
  has only double points.  Inductively, we can thus assume that $Q$ is a regular
  crosspolytope.  Therefore, its split hyperplanes have a non-empty intersection.  Since
  this intersection is contained in $v^\perp$ it follows that the split hyperplanes of $P$
  also have a non-empty intersection. Hence $P$ is a regular crosspolytope by
  Proposition~\ref{prop:cross-polytope}.  As a basis of the induction we can consider the
  case where $G$ is contained in $\Sph^1$.  As $G$ must span $\RR^2$, and as each point in
  $G$ occurs twice, the polytope $P$ has six vertices, and it is three-dimensional.  It
  can be shown that $P$ is a regular octahedron.  The two-dimensional case will be dealt
  with in the proof of Lemma~\ref{lem:Gale-both} below.
\end{proof}

\begin{lemma}\label{lem:Gale-prism-simplex}
  Let $P$ be a totally splittable $d$-polytope with $d\ge 2$ which is not a join.  If each
  point in the spherical Gale diagram $G$ is antipodal then $P$ is a  prism over a simplex.
\end{lemma}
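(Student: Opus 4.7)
\emph{Proof plan.} I would proceed by induction on $d$.

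\emph{Base case $d=2$.} Here $P$ is a convex polygon with $n$ vertices and $G\subset\Sph^{n-4}$ splits into antipodal pairs. For each pair $v^\star+w^\star=0$ the relation is a $2$-element circuit of $G$ with both signs positive; by the Gale-dual identification of circuits of $G$ with cocircuits of $V_P$ (with the same signed supports), this yields a positive cocircuit of $V_P$ with support $\{v,w\}$, i.e., a facet of $P$ containing the remaining $n-2$ vertices. Since a facet of a polygon is an edge, it has exactly two vertices, forcing $n-2=2$. Hence $n=4$ and $P$ is a convex quadrilateral, which is a prism over a $1$-simplex.

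\emph{Inductive step $d\geq 3$.} Fix an antipodal pair $(v^\star,w^\star)$ in $G$ and let $F:=\conv(\Vertices(P)\setminus\{v,w\})$ be the corresponding facet of $P$; both $v$ and $w$ lie on the same side of $\aff F$. By Proposition~\ref{prop:vertex-splits}, $F$ is totally splittable and $(d-1)$-dimensional. The spherical Gale dual $G_F$ is the contraction of $G$ along $\{v^\star,w^\star\}$: delete $v^\star,w^\star$, project the remaining points onto the orthogonal complement of $\langle v^\star\rangle\subset\RR^{n-d-1}$, and renormalize to the sphere. Since orthogonal projection commutes with negation, the remaining $k-1$ antipodal pairs of $G$ project to antipodal pairs of $G_F$, and they exhaust $G_F$. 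Assuming $F$ is not itself a join, the inductive hypothesis gives $F$ as a prism over a $(d-2)$-simplex, so in particular $k-1=d-1$ and thus $k=d$. Write $S^-,S^+$ for the two simplex caps of $F$. The positive dependence in $G_F$ among the Gale images of (say) $S^+$---which exists because $S^-$ is a facet of the prism $F$---lifts back to a dependence in $G$ of the form $\sum_{s\in S^+}\mu_s s^\star=\lambda v^\star$ for some $\lambda\in\RR$. Using Lemma~\ref{lem:unique-cycle} to identify the vertex-split circuit of $v$ together with Corollary~\ref{cor:perp_intersection} applied to $v$ (and symmetrically to $w$), the sign of $\lambda$ is forced to be the unique one making $\conv(S^-\cup\{v\})$ and $\conv(S^+\cup\{w\})$ the two $(d-1)$-simplex caps of $P$. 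Hence $P$ is a prism over a $(d-1)$-simplex.

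\emph{Main obstacles.} The two most delicate points are: (i) excluding the case that the facet $F$ is itself a join, which I would handle by showing via Proposition~\ref{prop:join-cycle-disjoint} that a join decomposition of $F$ together with the vertex-split incompatibilities of $v$ and $w$ in $P$ extends to a join decomposition of $P$, contradicting the standing hypothesis; and (ii) excluding geometrically ``wrong'' placements of $v,w$ that could yield a non-prism totally splittable polytope---here Lemma~\ref{lem:prism_ext} applied to $\conv(F\cup\{v\})$ rules out placements of $v$ almost beyond a non-cap facet of the prism $F$, and symmetrically for $w$.
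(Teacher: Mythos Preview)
Your base case $d=2$ is correct and clean: the antipodal pair $\{v^\star,w^\star\}$ in $G$ is a positive two-element circuit, dually a positive cocircuit of $P$ whose cosupport must be an edge, forcing $n=4$.

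The inductive step, however, has a genuine gap in the resolution of obstacle~(ii). You propose to apply Lemma~\ref{lem:prism_ext} to $\conv(F\cup\{v\})$ in order to rule out ``placements of $v$ almost beyond a non-cap facet of the prism~$F$''. But Lemma~\ref{lem:prism_ext} concerns a point almost beyond a facet of a prism \emph{in the same ambient dimension}: it assumes $\dim\bigl(\conv(P\cup\{v\})\bigr)=\dim P$. In your situation $F$ is $(d-1)$-dimensional and, since $\{v,w\}$ is the support of a positive cocircuit, $v\notin\aff F$. Hence $\conv(F\cup\{v\})$ is $d$-dimensional---indeed it is the pyramid $F*\{v\}$---and Lemma~\ref{lem:prism_ext} does not apply at all. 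The phrase ``$v$ almost beyond a facet of $F$'' has no meaning here. Consequently you have no mechanism to control how $v$ and $w$ sit over the prism~$F$.

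Relatedly, the assertion that ``the sign of $\lambda$ is forced'' is not justified. From $\sum_{s\in S^{+}}\mu_s\,\pi(s^\star)=0$ in $G_F$ you correctly lift to $\sum_{s\in S^{+}}\mu_s\,s^\star=\lambda v^\star$, and similarly $\sum_{s\in S^{-}}\nu_s\,s^\star=\mu v^\star$. If $\lambda$ and $\mu$ have the \emph{same} sign, then $v^\star\in\pos(S^{+})^\star\cap\pos(S^{-})^\star$ gives two distinct circuits witnessing $\pos v^\star$, which one could try to contradict via Lemma~\ref{lem:unique-cycle}; but you do not carry this out, nor do you handle the degenerate case $\lambda=0$ or $\mu=0$. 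And even once the two simplex caps $\conv(S^{-}\cup\{v\})$ and $\conv(S^{+}\cup\{w\})$ are established as facets, you still owe an argument that the remaining facets of $P$ are the correct quadrilateral ``side'' facets, i.e.\ that the oriented matroid of $P$ agrees with that of a prism. Obstacle~(i) is likewise only sketched; note that the argument of Proposition~\ref{prop:final} removes a single vertex, and extending it to the removal of the antipodal pair $\{v,w\}$ is not automatic.

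By contrast, the paper's proof avoids induction entirely. It first shows directly that $n=2k+2$ (where $k=n-d-1$): the lower bound uses that every cocircuit hyperplane of $G$ contains at least $2k-2$ points and misses at least four, while the upper bound uses Carath\'eodory's theorem together with the uniqueness in Lemma~\ref{lem:unique-cycle} to derive a contradiction from $k+2$ antipodal pairs. With $n=2d$ in hand, the cocircuits of $G$ are then matched against those of the prism's Gale diagram in one stroke. This sidesteps both of your obstacles.
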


\begin{proof}
  Suppose that each point in $G$ is antipodal.  Let $k:=n-d-1$ be the dimension of the
  linear span of $G$, and so we can view $G$ as a subset of $\Sph^{k-1}$.  We claim that
  the number of vertices of $P$ equals $n=2d$ or, equivalently, that $n=2k+2$.  Pick any
  cocircuit of $G$.  This corresponds to a linear hyperplane $H$ in $\RR^k$ which contains
  at least $2k-2$ points of $G$, due to antipodality.  Since $G$ is the Gale diagram of a
  polytope each open halfspace defined by $H$ contains at least $2$ points
  \cite[Theorem~6.19]{Ziegler95}.  We conclude that $n\ge 2k+2$.

  Now we will show that $n\leq2k+2$ hence $n=2k+2$.  To arrive at a contradiction, suppose
  that the spherical Gale diagram $G$ contains at least $k+2$ antipodal pairs.  Take any
  vertex $v$ of $P$, and let $v^\star$ be its dual in $G$.  Pick an affine hyperplane
  $H^\star$ in $\RR^k$ which is orthogonal to $v^\star$ and such that $v^\star$ and the origin
  are on different sides of $H^\star$. Let $W=\{v_1^\star,v_2^\star,\dots,v_m^\star\}$ be the set of
  points in $G$ distinct from $v^\star$ for which the corresponding rays
  intersect~$H^\star$.  Firstly, $m\ge k+1$ since $G$ contains $k+1$ antipodal pairs in
  addition to $v^*$ and its antipode.  Secondly, $v^\star$ is in the positive span of the
  rays corresponding to the points in $W$ since among those points are the elements of $C[v]_+$.
  By Carath\'eodory's Theorem \cite[\S2.3.5]{Gruenbaum03} we can assume
  that the corresponding rays of $v_1^\star,v_2^\star,\dots,v_{k+1}^\star$ still contain
  $v^*$ in their positive span.  Let $Q$ be the convex hull of the intersections of the
  rays corresponding to $v_1^\star,v_2^\star,\dots,v_{k+1}^\star$ with the hyperplane
  $H^\star$.  Now $Q$ is a $(k-1)$-dimensional polytope with $k+1$ vertices.  Such a
  polytope has precisely two triangulations $\Delta$ and $\Delta'$; these are related by a
  \emph{flip}, see \cite[\S2.4.1]{Triangulations}. Let $\sigma$ and $\sigma'$ be maximal
  simplices of $\Delta$ and $\Delta'$ containing the point $(\RR v^\star)\cap H^\star$.
  By construction $\sigma$ gives rise to a circuit $D$ of $G$ whose negative support
  corresponds to the vertices of $\sigma$ and its positive support corresponds to
  $v^\star$.  Similarly, $\sigma'$ defines another such circuit $D'$.  Since no maximal
  simplex of $\Delta$ also occurs as a maximal simplex in $\Delta'$ we have
  $\sigma\ne\sigma'$ implying $D\ne D'$.  This contradicts Lemma \ref{lem:unique-cycle},
  and this finally proves that $n$ equals $2k+2$.

  By now we know that $G$ consists of precisely $k+1$ antipodal pairs in $\Sph^{k-1}$.  So
  $P$ is a $d$-polytope with $2k+2=2d$ vertices. We have to show that $P$ has the same
  oriented matroid as a prism over a $(d-1)$-simplex. This will be done by showing that
  the cocircuits of $G$ (which are the circuits of $P$) agree with the circuits of a prism
  over a simplex. So consider the prism over a simplex with coordinates as in the proof of
  Lemma~\ref{lem:prism_ext}.  Then each circuit $C$ of this prism is of the form
  \[
  C_+=\{e_i,f_j\} \quad \text{and} \quad C_-=\{f_i,e_j\}
  \]
  for distinct $i$ and $j$.  Moreover, $e_i^\star$ and $f_i^\star$ are antipodes in the
  prism's spherical Gale diagram.  The cocircuits of $G$ are given by all (linear)
  hyperplanes in $\RR^k$ spanned by $k-1$ pairs of points in $G$. None of the other two
  pairs of points can be contained in such a hyperplane since $G$ is the Gale diagram of a
  polytope \cite[Theorem~6.19]{Ziegler95}. So the cocircuits of $G$ are given by
  $C^\star_+=\{x,y\}$, $C^\star_-=\{-x,-y\}$ for all distinct $x,y\in G$ with $x\not=\pm
  y$.
%

\end{proof}

\begin{lemma}\label{lem:Gale-both}
  Let $P$ be a totally splittable $d$-polytope with $d\ge 2$ which is not a join. If each
  point in the spherical Gale diagram $G$ is both a double point and antipodal then $d=2$,
  and $P$ is a quadrangle.
\end{lemma}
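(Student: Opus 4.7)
The plan is to apply Lemma~\ref{lem:Gale-prism-simplex} and combine its conclusion with the additional double-point hypothesis to pin down the dimension. All hypotheses of Lemma~\ref{lem:Gale-prism-simplex} hold in our setting ($P$ is totally splittable, not a join, $d \ge 2$, and each point of $G$ is antipodal), so that lemma yields that $P$ is a prism over a $(d-1)$-simplex. In particular $P$ has $n = 2d$ vertices, and $G$ is a multiset of $2d$ unit vectors spanning $\RR^{k}$ where $k := n-d-1 = d-1$.

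Next comes a counting step. The double-point hypothesis combined with $n = 2d$ implies that the $2d$ points of $G$ come from exactly $d$ distinct directions, each occurring with multiplicity two. The antipodal hypothesis then says these $d$ distinct directions fall into $d/2$ antipodal pairs; in particular $d$ is even. Since each antipodal pair spans only a $1$-dimensional linear subspace, the $d/2$ pairs together span a subspace of dimension at most $d/2$. But the distinct directions must linearly span $\RR^k = \RR^{d-1}$, and so $d/2 \ge d-1$, forcing $d \le 2$. Combined with the hypothesis $d \ge 2$ this pins down $d = 2$, hence $n = 4$, and $P$ is a prism over a $1$-simplex, that is, a quadrangle.

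The principal hazard to avoid is logical circularity: the proof of Lemma~\ref{lem:Gale-cross} explicitly defers its $d = 2$ base case to the present lemma, so the argument must not be routed through Lemma~\ref{lem:Gale-cross}. Going through Lemma~\ref{lem:Gale-prism-simplex} is safe because its proof is self-contained and makes no use of Lemma~\ref{lem:Gale-both}. The only other delicate point is the borderline $k = 1$ regime of Lemma~\ref{lem:Gale-prism-simplex}, but there the counting is trivially valid: $\Sph^0$ supports only one antipodal pair, so the ``at least $k+2$ antipodal pairs'' premise of the upper-bound argument is vacuous, and $n \le 2k+2 = 4$ is automatic.
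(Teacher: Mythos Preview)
Your proof is correct and follows essentially the same route as the paper's: invoke Lemma~\ref{lem:Gale-prism-simplex} to get that $P$ is a prism over a $(d-1)$-simplex, then argue that the double-point hypothesis forces $d=2$. The paper dispatches the second step in a single sentence (``The only case in which such a Gale diagram has the property that each point is a double point is $d=2$''), whereas you spell it out via the dimension count $d/2 \ge d-1$; your added remarks on avoiding circularity through Lemma~\ref{lem:Gale-cross} are apt and not present in the paper's terse version.
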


\begin{proof}
  If each point in $G$ is antipodal from Lemma~\ref{lem:Gale-prism-simplex} we know that
  $P$ is a prism over a $(d-1)$-simplex.  The only case in which such a Gale diagram has
  the property that each point is a double point is $d=2$, and $P$ is a quadrangle.
\end{proof}

Now we have all ingredients to prove our main result.

\begin{proof}[Proof of Theorem \ref{thm:totally-splittable-classification}]
  Let $P$ be a totally splittable $d$-polytope with spherical Gale dual $G$.  By
  Lemma~\ref{lem:join}, we can assume without loss of generality that $P$ is not a join.
  Consider a vertex $v\in \Vertices P$ with the property that $v^\star$ is neither a
  double nor an antipodal point. By Proposition~\ref{prop:vertex-splits}, the polytope
  $Q:=\conv(\Vertices P\setminus\{v\})$ obtained from $P$ by the deletion of $v$ is again
  totally splittable.  Moreover, $\dim Q=d$ since $P$ is not a pyramid.

  Let us assume for the moment that $Q$ is also not a join.  Then we can repeat this
  procedure until after finitely many steps we arrive at a polytope $P'$ with a spherical
  Gale diagram $G'$ which consists only of double and antipodal points.  In this situation
  Lemma~\ref{lem:join-double-points} implies that all points of $G'$ are double points or
  all points of $G'$ are antipodal.  Combining Lemma~\ref{lem:Gale-cross},
  Lemma~\ref{lem:Gale-prism-simplex}, and Lemma~\ref{lem:Gale-both}, we can conclude that
  either $d=\dim P=\dim P'=2$ and $P'$ is a quadrangle, or $d\ge 3$ and $P'$ is a regular
  crosspolytope, or $d\ge 3$ and $P'$ is a prism over a simplex.  The question remaining
  is whether $P$ and $P'$ can actually be different.  For $d\ge 3$ this is ruled out by
  Lemma~\ref{lem:cross_ext} (if $P'$ is a crosspolytope) and Lemma~\ref{lem:prism_ext} (if
  $P'$ is a prism).  In the final case $\dim P=\dim Q=\dim P'=2$.

  The proof of our main result will be concluded with the subsequent proposition.
\end{proof}

\begin{proposition}\label{prop:final}
  Let $P$ be a totally splittable polytope with spherical Gale diagram $G$, and let $v$ be
  a vertex of $P$ with the property that its dual $v^\star$ in $G$ is neither a double nor
  an antipodal point. If $P$ is not a join then neither is $Q:=\conv(\Vertices
  P\setminus\{v\})$.
\end{proposition}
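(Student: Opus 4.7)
I would prove the contrapositive: assuming $Q := \conv(\Vertices P\setminus\{v\})$ is a join, I deduce that $P$ is itself a join, which contradicts the standing hypothesis.

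First, the hypothesis that $v^\star$ is neither a double nor an antipodal point ensures $Q$ is proper, because no other dual vector in $G$ is parallel to $v^\star$ and hence no dual of $Q$ collapses to zero when one passes from the Gale diagram of $P$ to that of $Q$. Writing $Q = Q_1\join Q_2$ with $U_i:=\Vertices Q_i$, neither factor is therefore a single vertex, so $\dim Q_i\ge 1$ and the facets of $Q$ are precisely the sets $F_1\join Q_2$ and $Q_1\join F_2$ for $F_i$ a facet of $Q_i$.

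By Proposition \ref{prop:vertex-splits}, the neighbors of $v$ in $P$ span a facet $F$ of $Q$; after possibly swapping the factors, I may assume $F = F_1\join Q_2$. Then $U_2 \subseteq \Vertices F$, so $v$ is adjacent in $P$ to every vertex of $U_2$. The heart of the argument is to show that every edge $\{u,w\}$ of $Q$ with $u\in U_1$ and $w\in U_2$ also remains an edge of $P$. By Remark \ref{rem:beyond}, all facets of $Q$ other than $F$ persist to $P$, so it suffices to exhibit a facet $F'\neq F$ of $Q$ containing $\{u,w\}$: if $u$ lies on a facet $F_1'\neq F_1$ of $Q_1$, take $F' = F_1'\join Q_2$; otherwise, pick any facet $F_2'$ of $Q_2$ containing $w$ and set $F' = Q_1\join F_2'$, which differs from $F$ because $F_2'\subsetneq Q_2$. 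This case analysis, which relies on $\dim Q_i\ge 1$, is the main obstacle and requires most of the care.

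The partition $\Vertices P = (\{v\}\cup U_1)\sqcup U_2$ now has the property that every vertex on one side is joined by an edge to every vertex on the other, so by Lemma \ref{lem:compatible-vertex-splits} no vertex split across the partition is compatible. The hypothesis that $v^\star$ is not a double point excludes $P$ from being a regular crosspolytope, in which every Gale dual is a double point (Example \ref{ex:totally-splittable}(iii)). Since $P$ is proper totally splittable by Lemma \ref{lem:join-double-points} together with our standing assumption that $P$ is not a join, Proposition \ref{prop:join-cycle-disjoint} forces $P$ to be a join -- the desired contradiction.
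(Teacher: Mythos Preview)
Your proof is correct and takes a genuinely different route from the paper's. The paper works entirely in the Gale dual: it identifies the Gale diagram of $Q$ with the contraction $G/v^\star$, notes that the decomposition $Q=Q_1\join Q_2$ forces $G/v^\star=G_1\sqcup G_2$, observes that the cosupport of the circuit $C[v]$ projects into $G_1$ so that $v^\star\notin\pos(U_2^\star)$, and then applies the Separation Theorem in $\RR^{n-d-1}$ to produce a linear hyperplane separating $v^\star$ from the duals of $U_2$, finishing via the argument of Lemma~\ref{lem:join-double-points}. You instead stay on the primal side: you exploit the explicit facet structure of a join, identify the neighbors of $v$ with $\Vertices F$ via Proposition~\ref{prop:vertex-splits}, and argue edge-persistence from $Q$ to $P$ facet by facet, so that Lemma~\ref{lem:compatible-vertex-splits} and Proposition~\ref{prop:join-cycle-disjoint} apply directly to the partition $(\{v\}\cup U_1)\sqcup U_2$. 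Your argument is more elementary in that it avoids both the contraction formalism and the Separation Theorem; the paper's is slicker because it recycles the machinery already built for Lemma~\ref{lem:join-double-points}.

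One small caveat in your Step~4: the statement from Remark~\ref{rem:beyond} that ``all facets of $Q$ other than $F$ persist to $P$'' is only literally correct when $v$ is strictly beyond $F$. If $v$ happens to lie on $\aff F'$ for some facet $F'\ne F$ of $Q$, then the facet of $P$ supported by that hyperplane is $\conv(F'\cup\{v\})\supsetneq F'$, and one must still verify that the edge $\{u,w\}$ survives inside this enlarged facet---which is not automatic when $\{u,w\}\subseteq F\cap F'$ (this occurs precisely when $u\in F_1$). This is easily patched, for instance by inducting on dimension since $\conv(F'\cup\{v\})$ is again totally splittable by Proposition~\ref{prop:vertex-splits}; and in any case the imprecision is inherited from the paper's own Remark~\ref{rem:beyond}.
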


\begin{proof}
  By \cite[Lemma 3.4]{BilleraSturmfels93}, the Gale transform of $Q$ is the minor
  $G/v^\star$ obtained by contracting $v^\star$ in $G$.  Up to an affine transformation we
  can assume that $v^\star$ is the first unit vector in $\RR^{n-d-1}$, and so $G/v^\star$
  is the projection of $G\setminus\{v^\star\}$ to the last $n-d-2$ coordinates.  We call
  the projection map $\pi$.  Since $v^\star$ is neither antipodal nor a double point, no
  point in $G/v^\star$ is a loop, and thus $Q$ is proper, that is, it is not a pyramid.
  
  So suppose that $Q=Q_1\join Q_2$ is a join with $\dim Q_1\ge1$ and $\dim Q_2\ge 1$.
  Then there are spherical Gale diagrams $G_1$ and $G_2$ of $Q_1$ and $Q_2$, respectively,
  such that $G/v^\star=G_1\sqcup G_2$ as a multiset in $\Sph^{n-d-3}$.  Up to exchanging
  the roles of $Q_1$ and $Q_2$, there is a facet $F_1$ of $Q_1$ such that $v^\perp\cap P$,
  which is a facet of $Q$, is a join $F_1*Q_2$.  That is to say, the cosupport of the
  circuit $C[v]$, corresponding to the vertex split of $v$ in $P$, is mapped to $G_1$ by
  $\pi$. In particular, $v^\star$ is not in the positive hull of the points dual to the
  vertices of $Q_2$.  The Separation Theorem \cite[2.2.2]{Gruenbaum03} implies that there
  is a linear hyperplane $H$ in $\RR^{n-d-1}$ separating $v^\star$ from the duals of the
  vertices of $Q_2$. As in the proof of Lemma \ref{lem:join-double-points} we can now
  argue that $P$ is a join, which contradicts our assumptions.
\end{proof}

This finally completes the proof of the theorem.

\begin{remark}
  If $v^\star$ is antipodal or a double point, then $Q$ is a pyramid over the unique facet
  of $Q$ which is not a facet of $P$.  This shows that the assumption on $v^\star$ in
  Proposition~\ref{prop:final} is necessary.  For instance, by inspecting the two Gale
  diagrams in Figure~\ref{fig:octahedron_prism} one can see directly that if $P$ is a
  regular octahedron or a prism over a triangle, in both cases $Q$ is a pyramid over a
  quadrangle.
\end{remark}

\begin{remark}
  A triangulation $\Delta$ of a $d$-polytope is \emph{foldable} if the dual graph of
  $\Delta$ is bipartite.  This is equivalent to the property that the $1$-skeleton of
  $\Delta$ is $(d+1)$-colorable.  In \cite[Corollary 4.12]{HerrmannJoswig08} it was proved
  that any triangulation generated by splits is foldable.  This means that each
  triangulation of a totally split polytope is necessarily foldable.
\end{remark}

We are indebted to Raman Sanyal for sharing the following observation with us.

\begin{corollary}\label{cor:equidecomposable}
  Each totally splittable polytope is equidecomposable.
\end{corollary}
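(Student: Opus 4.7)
The plan is to combine the classification in Theorem~\ref{thm:totally-splittable-classification} with an elementary verification on each basic building block, plus a join-preservation step.

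First I would read off from Example~\ref{ex:totally-splittable} that each of the four basic classes is equidecomposable. A simplex has a unique triangulation; every triangulation of an $n$-gon uses exactly $n-2$ triangles; every triangulation of a $d$-dimensional regular crosspolytope arises from a choice of $d-1$ of the $d$ coordinate splits and hence has exactly $2^{d-1}$ maximal simplices; and every triangulation of a prism over a $(d-1)$-simplex has a path on $d$ nodes as dual graph and hence exactly $d$ maximal simplices. In each case the maximal-simplex count (and, in fact, the full $f$-vector) is the same across all triangulations.

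Next I would show that the join operation preserves equidecomposability. By \cite[Theorem~4.2.7]{Triangulations}, already invoked in the proof of Lemma~\ref{lem:join}, every triangulation of a join $P\join Q$ arises as a cell-by-cell join $\Delta_P\join\Delta_Q$ of triangulations of the factors, and its maximal simplices are exactly the pairwise joins of maximal simplices of $\Delta_P$ with those of $\Delta_Q$. Consequently the number of maximal simplices of $\Delta_P\join\Delta_Q$ equals the product of those of $\Delta_P$ and $\Delta_Q$. Thus if both $P$ and $Q$ are equidecomposable, then so is $P\join Q$.

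A straightforward induction on the number of join factors, combined with Theorem~\ref{thm:totally-splittable-classification}, then yields the statement for an arbitrary totally splittable polytope. The only delicate point is to confirm that the chosen notion of equidecomposability (whether one reads it as ``all triangulations share the same $f$-vector'' or, more finely, as ``the multiset of simplex volumes is a polytope invariant'') is actually preserved by joins; both variants follow from the cell-by-cell description of triangulations of joins, since the volume of a join of simplices in mutually skew affine subspaces depends only on the data of the factors. Alternatively, one could bypass the join argument by appealing to the preceding remark, which guarantees that every triangulation of a totally splittable polytope is foldable, and then invoking a general equidecomposability result for totally foldable polytopes; the classification-based route is however more self-contained.
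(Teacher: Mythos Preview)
Your approach is essentially the paper's own: invoke the classification of Theorem~\ref{thm:totally-splittable-classification}, verify equidecomposability for each basic type, and observe that the property is preserved under joins. Two minor remarks: your count of $2^{d-1}$ maximal simplices for the $d$-crosspolytope is actually correct (the paper's stated $2d-2$ agrees only for $d\le 3$), whereas your proposed foldability shortcut does not correspond to any known general theorem and should simply be dropped.
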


A polytope is \emph{equidecomposable} if each triangulation has the same $f$-vector.

\begin{proof}
  This follows from the classification case by case: Each triangulation of an $n$-gon has
  exactly $n-2$ triangles. Each triangulation of a $d$-dimensional regular crosspolytope
  has exactly $2d-2$ maximal cells.  Each triangulation of a prism over a $(d-1)$-simplex
  has exactly $d$ maximal cells.  A similar count can be done for the lower dimensional
  cells.  Observe that equidecomposability is preserved under taking joins.
\end{proof}

It would be interesting to know if Corollary~\ref{cor:equidecomposable} has a direct proof
without relying on Theorem~\ref{thm:totally-splittable-classification}.

\begin{remark}
  Bayer \cite{Bayer93} defines a polytope to be \emph{weakly neighborly} if any $k$ of its
  vertices are contained in some face of dimension $2k-1$.  She shows that a weakly
  neighborly polytope is necessarily equidecomposable \cite[Corollary~10]{Bayer93}.
  Prisms over simplices are weakly neighborly whereas crosspolytopes are not; so the
  approach of Bayer is somewhat transverse to ours.  Moreover, all circuits of a totally
  splittable polytope are \emph{balanced} in the sense that the positive and the negative
  support share the same cardinality. This relates to the question of whether a polytope
  all of whose circuits are balanced is always equidecomposable.  The converse is true
  \cite[Theorem~1]{Bayer93}.
\end{remark}

\bibliographystyle{amsplain}
\bibliography{main}

\end{document}